\documentclass[review,10]{elsarticle}

%\usepackage{lineno,hyperref}
%\modulolinenumbers[5]

\usepackage{lipsum}
\usepackage{hyperref}
\usepackage[hyperref]{xcolor}

\hypersetup{
  colorlinks=true,
}
\usepackage[center]{caption}
\usepackage{xcolor}
\usepackage{mathdots}
\usepackage{amsmath}
\usepackage{amsfonts}
\usepackage{graphicx}
\usepackage{subfigure}
\usepackage{booktabs}
\usepackage{amsthm}
\usepackage{empheq}
\usepackage{amssymb}
\usepackage{mathrsfs}
\usepackage[
singlelinecheck=false % <-- important
]{caption}
%%%new commands%%%%%%%%
\newtheorem{theorem}{Theorem}[section]
\newtheorem{lemma}{Lemma}[section]
\newtheorem{remark}{Remark}[section]

\newtheorem{prop}{Proposition}[section]

\usepackage{geometry}
\geometry{a4paper,left=2.5cm,right=2.5cm,top=2.5cm,bottom=2.5cm}

\numberwithin{equation}{section}
%%%%%%%%%%

%\journal{Journal of \LaTeX\ Templates}
%\journal{Stochastic Process and their Applications}
%%%%%%%%%%%%%%%%%%%%%%%
%% Elsevier bibliography styles
%%%%%%%%%%%%%%%%%%%%%%%
%% To change the style, put a % in front of the second line of the current style and
%% remove the % from the second line of the style you would like to use.
%%%%%%%%%%%%%%%%%%%%%%%

%% Numbered
%\bibliographystyle{model1-num-names}

%% Numbered without titles
%\bibliographystyle{model1a-num-names}

%% Harvard
%\bibliographystyle{model2-names.bst}\biboptions{authoryear}

%% Vancouver numbered
%\usepackage{numcompress}\bibliographystyle{model3-num-names}

%% Vancouver name/year
%\usepackage{numcompress}\bibliographystyle{model4-names}\biboptions{authoryear}

%% APA style
\bibliographystyle{model5-names}\biboptions{authoryear}

%% AMA style
%\usepackage{numcompress}\bibliographystyle{model6-num-names}

%% `Elsevier LaTeX' style
%\bibliographystyle{elsarticle-num}
\bibliographystyle{elsarticle-harv}
%%%%%%%%%%%%%%%%%%%%%%%

\begin{document}

\begin{frontmatter}

%\title{Local time of infinite time horizon Brownian  bridge\tnoteref{mytitlenote}}
\title{The first passage time on the (reflected) Brownian motion with broken drift hitting a random boundary}
%\tnotetext[mytitlenote]{Fully documented templates are available in the elsarticle package on \href{http://www.ctan.org/tex-archive/macros/latex/contrib/elsarticle}{CTAN}.}

%% Group authors per affiliation:
%\author{Elsevier\fnref{myfootnote}}
%\address{Radarweg 29, Amsterdam}
%\fntext[myfootnote]{Since 1880.}

%% or include affiliations in footnotes:
\author[mymainaddress]{Zhenwen Zhao}
%\ead[url]{www.elsevier.com}
\ead{zzw@mail.nankai.edu.cn}

\author[mymainaddress]{Yuejuan Xi\corref{mycorrespondingauthor}}
\cortext[mycorrespondingauthor]{Corresponding author}
\ead{yjx@mail.nankai.edu.cn}

\address[mymainaddress]{School of Mathematical Sciences, Nankai University, Tianjin, PR China, 300071}

%\address[mysecondaryaddress]{School of Mathematical Sciences, Nankai University, Tianjin, PR China, 300071}
\begin{abstract}
  In this paper we consider a (reflected) Brownian motion with broken drift hitting a random boundary. Some dedicated calculations allow us to obtain the formula on the joint Laplace transform of the hitting time and hitting position. These develop the research on first rendezvous times of (reflected) Brownian motion and compound Poisson-type processes by \citet{MR2122800}.
\end{abstract}

\begin{keyword}
Broken drift; the first passage time; Laplace transform
\MSC[2010] 60J60\sep 60G40
\end{keyword}

\end{frontmatter}

%\linenumbers
\section{Introduction}
The reflected diffusion processes as a class of typical Markov processes have been widely applied in the subjects, such as queueing system, workload process, operation research and the others. Some references refer the reader to \citet{MR1092214}, \citet{MR2396593}, \citet{MR2122800}, etc. On the other hand, the threshold process with piecewise drift term or diffusion term has also been another interesting class of processes (see, \citet{MR3414915}, \citet{MR717388},  \citet{MR3998245}). Some specific threshold diffusion processes, for instance Brownian motion with alternative (broken) drift and Oscillating Brownian motion, were studied in \citet{MR1912205} and \citet{MR474526}.

The hitting boundary problem of a Markov process is a classical subject in the Markov process potential analysis. In recent years, most literature on the boundary crossing probability focuses on the linear or piecewise linear boundary (cf. \citet{MR1816120}, \citet{MR1429054}). For a Markov process hitting a random boundary, \citet{MR2122800} formulated some results of  the first rendezvous time of (reflected) Brownian motion (BM) and compound Poisson-type boundaries. \citet{MR2884811} extended the results  in \citet{MR2122800} to the (reflected) Ornstein-Uhlenbeck process. \citet{MR3102489} discussed the problems of some boundary crossing probabilities for a Brownian motion with different  stochastic boundaries, in particular, including compound Poisson process boundaries. In this paper, we consider a reflected Brownian motion (RBM) with broken drift hitting a random boundary. We will develop the above results in this direction.

Let us begin with a reflected diffusion process. Assume that  $(\Omega,\mathcal{F},\mathcal{F}_t,\mathcal{P})$ is a probability space satisfying the usual conditions, and $W=\{W_t, t\geq 0\}$ is a standard Brownian motion. Define the diffusion processes $X=\{X_t, t\geq 0\}$ and $\tilde{X}=\{\tilde{X}_t, t\geq 0\}$ via the following stochastic differential equations (SDEs):
\begin{align}\label{e:X}
dX_t&=\mu(X_t)dt+dW_t,\\\label{e:X1}
d\tilde{X_t}&=\mu(\tilde{X_t})dt+dW_t+dL_t,
\end{align}
where $\mu(x)=\mu_1\mathbf{1}_{\{x<c\}}+\mu_2\mathbf{1}_{\{x\geq c\}}$, $\mathbf{1}_{\{\cdot\}}$ is an indicator function,
%\begin{eqnarray*}
%\mu(x)=
%\begin{cases}
%\mu_1, &x<c,\cr \mu_2, &x\geqslant c,
%\end{cases}
%\end{eqnarray*}
$\mu_1$, $\mu_2\in\mathbb{R}$ are constants and $c$ is a positve constant. The process $L=\{L_t:~t\geq 0\}$ is the local time process of $X$ at the boundary zero and $L_t$ satisfies the property that
\begin{equation}\label{e:Lt}
L_t=\int_0^t \mathbf{1}_{\{\tilde{X}_s=0\}}dL_s~~~\textup{for all}~t\geq 0,
\end{equation}
please see \citet{MR2134100}, \citet{MR1092214} for details. We call $X$ and $\tilde{X}$ as Brownian motion with broken drift  and reflected Brownian motion with broken drift respectively. The existence and uniqueness of the SDEs \eqref{e:X} and \eqref{e:X1} were proved in \citet{pilipenko2014introduction}.

Next we introduce a random boundary $C(t)$ by
\begin{equation*}\label{e:C1}
C(t)\equiv b+Y \mathbf{1}_{\{T_1\leq t\}},
\end{equation*}
 where $b$ is a constant, $T_1$ admits an exponential law with parameter $\lambda>0$, which is independent of the Brownian motion W. In addition, the random variable $Y$ is independent of $(T,W)$, which admits a distribution function given by $F(dy)$.

Define the hitting times:
\begin{align}\label{e:tau}
\tau&=\inf\{t\geq 0: X_t=0~~\textup{or}~~X_t=C(t)\},\\\label{e:tau2}
\tilde{\tau}&=\inf\{t\geq 0: \tilde{X}_t=C(t)\},
\end{align}
by convention, $\inf{\O}=\infty$.

In \citet{MR2122800}, they computed first rendezvous times in the case of the  (reflected) BM. But for our case, due to the existence of the broken drift, using the conventional Meyer-Tanaka formula to construct martingale does not work. So we shall use time-dependent Meyer-Tanaka to derive first passage time of (reflected) BM hitting a random boundary in this paper.

The paper is organized as follows. In Section \ref{section2}, we present and calculate some preliminary results of BM with broken drift and RBM with broken drift, respectively. In Section \ref{section3}, we deal with the first passage time of a (reflected) Brownian motion with broken drift hitting a random boundary.
%\section{The BM with broken drift and RBM with broken drift}\label{section2}
\section{Preliminaries}\label{section2}
In this section, we introduce some elementaries about the (reflected) BM with broken drift.
%We first introduce the infinitesimal operator of the Brownian motion with broken drift.
\subsection{ The BM with broken drift }
Assume $X$ is the diffusion process defined in \eqref{e:X}, then its infinitesimal generator defined on $C_b^2(\mathcal{R})$ should be
\begin{equation*}
\mathscr{A}f(x)=\frac{1}{2}f^{\prime\prime}(x)+\mu(x) f^{\prime}(x),~~~~~~x\neq c.
\end{equation*}
It follows from \citet[Page 128]{MR1912205} that the speed measure of $X$ is
\begin{equation*}
m(dx)=(2e^{2\mu_1x}\mathbf{1}_{\{x<c\}}+2e^{2(\mu_1-\mu_2)c}e^{2\mu_2x}\mathbf{1}_{\{x\geq c\}})dx
\end{equation*}
%the scale function is
%\begin{eqnarray*}
%s(x)=
%\begin{cases}
%\frac{1}{2\mu_1}(1-e^{-2\mu_1x}), &x<c,\cr \frac{1}{2\mu_1}({1 -e^{-2\mu_1c}})+\frac{1}{2\mu_2}e^{-2\mu_1c}-\frac{1}{2\mu_2}e^{2(\mu_2-\mu_1)c}e^{-2\mu_2x}, &x\geqslant c.
%\end{cases}
%\end{eqnarray*}
and the scale density $s(x)$ is
\begin{equation*}
s(x)=(e^{-2\mu_1x}\mathbf{1}_{\{x<c\}}+e^{2(\mu_2-\mu_1)c}e^{-2\mu_2x}\mathbf{1}_{\{x\geq c\}})dx.
\end{equation*}
Define
\begin{eqnarray*}
\psi_{s}(x)=
\begin{cases}
e^{\lambda_1^{+}x}, &x<c \cr A_1e^{\lambda_2^{-}x}+A_2e^{\lambda_2^{+}x}, &x\geq c
\end{cases}
\end{eqnarray*}
and
\begin{eqnarray*}
\varphi_{s}(x)=
\begin{cases}
B_1e^{\lambda_1^{-}x}+B_2e^{\lambda_1^{+}x}, &x<c,\cr e^{\lambda_2^{-}x}, &x\geq c,
\end{cases}
\end{eqnarray*}
where the coefficients $\lambda_1^{+}$, $\lambda_1^{-}$, $\lambda_2^{+}$, $\lambda_2^{-}$, $A_1$, $A_2$, $B_1$ and $B_2$ are defined respectively by:
\begin{equation*}
\lambda_1^{+}=-\mu_1+\sqrt{\mu_1^2+2s},~~~\lambda_1^{-}=-\mu_1-\sqrt{\mu_1^2+2s},
\end{equation*}
\begin{equation*}
\lambda_2^{+}=-\mu_2+\sqrt{\mu_2^2+2s},~~~\lambda_2^{-}=-\mu_2-\sqrt{\mu_2^2+2s},
\end{equation*}
\begin{equation*}
A_1=\frac{\lambda_2^{+}-\lambda_1^{+}}{\lambda_2^{+}-\lambda_2^{-}}e^{(\lambda_1^{+}-\lambda_2^{-})c},~~~ A_2=\frac{\lambda_1^{+}-\lambda_2^{-}}{\lambda_2^{+}-\lambda_2^{-}}e^{(\lambda_1^{+}-\lambda_2^{+})c},
\end{equation*}
\begin{equation*}
B_1=\frac{\lambda_1^{+}-\lambda_2^{-}}{\lambda_1^{+}-\lambda_1^{-}}e^{(\lambda_2^{-}-\lambda_1^{-})c},~~~ B_2=\frac{\lambda_2^{-}-\lambda_1^{-}}{\lambda_1^{+}-\lambda_1^{-}}e^{(\lambda_2^{-}-\lambda_1^{+})c}.
\end{equation*}
Then we can check $\psi_{s}(.)$ and $\varphi_{s}(.)$ are the unique (up to a multiplicative constant) increasing and decreasing solutions of the following ordinary differential equation (ODE)
\begin{equation}\label{e:ode1}
\mathcal{A}f(x)=\frac{1}{2}f^{\prime\prime}(x)+\mu(x)f^{\prime}(x)=s f(x), ~x\neq c,
\end{equation}
which satisfy the boundary conditions
$\mathcal{D}(\mathcal{A})=\{f:f,\mathcal{L}f\in \mathcal{C}_b(\mathcal{R}),f\in  \mathcal{C}^{1}_b(\mathcal{R})\}$ (see the Theorem VII.3.12 in \citet{MR1725357} for details).

\par
Let $p(t,x,y)$ be the transition density function of $X$, and define $G_s(x,y)$ by
\begin{equation*}
G_{s}(x,y)=\int_0^{\infty} e^{-s t} p(t;x,y) dt.
\end{equation*}
Then by Proposition 11 \citet[Section 2.1]{MR1912205}, $G_{s}(x,y)$ can be formulated as
\begin{equation}\label{e:Gs}
G_{s}(x,y)=\frac{m(y)}{\omega_{s}}\psi_{s}(x\wedge y)\varphi_{s}(x\vee y),
\end{equation}
where the Wronskian $\omega_{s}$ is a constant independent of $x$ and
\begin{equation}\label{e:ws}
\omega_{s}=\frac{1}{s(x)}(\varphi_{s}(x)\psi^\prime_{s}(x)-\varphi^\prime_{s}(x)\psi_{s}(x)).
\end{equation}
Next we can get the exact expression of the Green function $G_{s}(x,y)$.
\begin{prop}
(1) for $x<c$,
\par
\begin{eqnarray*}
G_{s}(x,y)=
\begin{cases}
\frac{2e^{2\mu_1 y}}{\omega_{s}}(B_1e^{\lambda_1^{-}x}+B_2e^{\lambda_1^{+}x})e^{\lambda_1^{+}y},&y\leq x, \cr \frac{2e^{2\mu_1 y}}{\omega_{s}}(B_1e^{\lambda_1^{-}y}+B_2e^{\lambda_1^{+}y})e^{\lambda_1^{+}x}, &x<y<c, \cr \frac{2e^{2(\mu_1-\mu_2)c}e^{2\mu_2 y}}{\omega_{s}}e^{\lambda_1^{+}x}e^{\lambda_2^{-}y},  &y\geq c,
\end{cases}
\end{eqnarray*}
\par
(2) for $x\geq c$,
\begin{eqnarray*}
G_{s}(x,y)=
\begin{cases}
\frac{2e^{2\mu_1 y}}{\omega_{s}} e^{\lambda_1^{+}y}e^{\lambda_2^{-}x},&y<c, \cr \frac{2e^{2(\mu_1-\mu_2)c}e^{2\mu_2 y}}{\omega_{s}}(A_1e^{\lambda_2^{-}y}+A_2e^{\lambda_2^{+}y})e^{\lambda_2^{-}x}, &c\leq y\leq x, \cr \frac{2e^{2(\mu_1-\mu_2)c}e^{2\mu_2 y}}{\omega_{s}}(A_1e^{\lambda_2^{-}x}+A_2e^{\lambda_2^{+}x})e^{\lambda_2^{-}y},  &y>x,
\end{cases}
\end{eqnarray*}
where the Wronskian is
\begin{equation*}
\omega_{s}=(\lambda_1^{+}-\lambda_2^{-})e^{(\lambda_2^{-}-\lambda_1^{-})c}.
\end{equation*}
\end{prop}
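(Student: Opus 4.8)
The plan is to substitute the explicit expressions for $\psi_s$, $\varphi_s$ and the speed density $m$ directly into the general representation \eqref{e:Gs}, and then to evaluate the Wronskian $\omega_s$ from \eqref{e:ws}. Since both $\psi_s$ and $\varphi_s$ are defined piecewise across the threshold $c$, the computation naturally splits into six sub-cases according to the positions of $x\wedge y$ and $x\vee y$ relative to $c$; these are exactly the three cases listed under (1) for $x<c$ and the three listed under (2) for $x\ge c$.

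First, for each sub-case I would read off which branch of $\psi_s$ applies at the smaller argument $x\wedge y$, which branch of $\varphi_s$ applies at the larger argument $x\vee y$, and which branch of $m$ applies at $y$. For instance, when $x<c$ and $y\ge c$ one has $x\wedge y=x<c$ and $x\vee y=y\ge c$, so $\psi_s(x)=e^{\lambda_1^{+}x}$, $\varphi_s(y)=e^{\lambda_2^{-}y}$ and $m(y)=2e^{2(\mu_1-\mu_2)c}e^{2\mu_2 y}$; inserting these into \eqref{e:Gs} immediately produces the third line under (1). The remaining five cases follow by the same mechanical substitution, using $m(y)=2e^{2\mu_1 y}$ when $y<c$ and $m(y)=2e^{2(\mu_1-\mu_2)c}e^{2\mu_2 y}$ when $y\ge c$.

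To obtain the constant $\omega_s$ I would exploit that \eqref{e:ws} is independent of $x$ and evaluate it in the region $x<c$, where $s(x)=e^{-2\mu_1 x}$, $\psi_s(x)=e^{\lambda_1^{+}x}$ and $\varphi_s(x)=B_1 e^{\lambda_1^{-}x}+B_2 e^{\lambda_1^{+}x}$. A short computation gives $\varphi_s\psi_s'-\varphi_s'\psi_s=B_1(\lambda_1^{+}-\lambda_1^{-})e^{(\lambda_1^{+}+\lambda_1^{-})x}$, and the key simplification is the identity $\lambda_1^{+}+\lambda_1^{-}=-2\mu_1$, which makes the exponential cancel exactly against $1/s(x)$. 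Together with the stated value of $B_1$ this yields $\omega_s=B_1(\lambda_1^{+}-\lambda_1^{-})=(\lambda_1^{+}-\lambda_2^{-})e^{(\lambda_2^{-}-\lambda_1^{-})c}$, as claimed.

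The main obstacle, and the point that genuinely justifies the stated coefficients, is verifying that $\psi_s$ and $\varphi_s$ are really the increasing and decreasing $C^1$ solutions of \eqref{e:ode1} across $c$: one must check both continuity and matching of the first derivatives at $x=c$, which is precisely what forces the displayed formulas for $A_1,A_2,B_1,B_2$. A useful consistency check is to recompute $\omega_s$ from \eqref{e:ws} in the region $x\ge c$, using $s(x)=e^{2(\mu_2-\mu_1)c}e^{-2\mu_2 x}$, the identity $\lambda_2^{+}+\lambda_2^{-}=-2\mu_2$, and the value of $A_2$; the exponent then reduces via $\lambda_1^{+}-\lambda_2^{+}+2\mu_1-2\mu_2=\lambda_2^{-}-\lambda_1^{-}$ to return the identical constant. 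This simultaneously validates the $C^1$ gluing at $c$ and confirms that $\omega_s$ is well defined independently of the region in which \eqref{e:ws} is evaluated.
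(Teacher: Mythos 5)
Your proposal is correct and follows essentially the same route as the paper, which obtains the proposition by direct substitution of the piecewise expressions for $\psi_s$, $\varphi_s$ and $m$ into \eqref{e:Gs} and evaluation of \eqref{e:ws}; your case analysis, the computation $\omega_s=B_1(\lambda_1^{+}-\lambda_1^{-})=(\lambda_1^{+}-\lambda_2^{-})e^{(\lambda_2^{-}-\lambda_1^{-})c}$ via $\lambda_1^{+}+\lambda_1^{-}=-2\mu_1$, and the cross-check in the region $x\ge c$ all check out.
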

Thus the transition density function $p(t;x,y)$ of $X$ can follows from inverse Laplace transform as
\begin{equation}\label{e:p}
p(t;x,y)=\mathcal{L}^{-1}(G_{s}(x,y))(t).
\end{equation}
This will be used in our derivations below.
\subsection{RBM with broken drift }
\par
Recall $\tilde{X}$, a reflected Brownian motion $\tilde{X}$ with broken drift, defined in \eqref{e:X1}, and its infinitesimal generator for $f\in C_b^2(\mathcal{R})$ should be
\begin{equation*}
\mathscr{A}f(x)=\frac{1}{2}f^{\prime\prime}(x)+\mu(x) f^{\prime}(x),~~~~~~x>0~\textup{and}~x\neq c.
\end{equation*}
The speed measure of $\tilde{X}$ is
\begin{equation*}
m(dx)=(2e^{2\mu_1x}\mathbf{1}_{\{0\leq x<c\}}+2e^{2(\mu_1-\mu_2)c}e^{2\mu_2x}\mathbf{1}_{\{x\geq c\}})dx
\end{equation*}
%the scale function is
%\begin{eqnarray*}
%\tilde{s}(x)=
%\begin{cases}
%\frac{1}{2\mu_1}(1-e^{-2\mu_1x}), &0\leq x<c,\cr \frac{1}{2\mu_1}({1 -e^{-2\mu_1c}})+\frac{1}{2\mu_2}e^{-2\mu_1c}-\frac{1}{2\mu_2}e^{2(\mu_2-\mu_1)c}e^{-2\mu_2x}, &x\geq c,
%\end{cases}
%\end{eqnarray*}
and the scale density $\tilde{s}(x)$ is
\begin{equation*}
\tilde s(x)=(e^{-2\mu_1x}\mathbf{1}_{\{0<x<c\}}+e^{2(\mu_2-\mu_1)c}e^{-2\mu_2x}\mathbf{1}_{\{x\geq c\}})dx.
\end{equation*}
Set
\begin{eqnarray}\label{e:tilde(psai)}
\tilde{\psi}_{s}(x)=
\begin{cases}
\lambda_1^{+}e^{\lambda_1^{-}x}-\lambda_1^{-}e^{\lambda_1^{+}x}, &0\leq x<c ,\cr \tilde{A}_1e^{\lambda_2^{-}x}+\tilde{A}_2e^{\lambda_2^{+}x}, &x\geq c,
\end{cases}
\end{eqnarray}
and
\begin{eqnarray*}
\tilde{\varphi}_{s}(x)=
\begin{cases}
\tilde{B}_1e^{\lambda_1^{-}x}+\tilde{B}_2e^{\lambda_1^{+}x}, &0\leq x<c,\cr e^{\lambda_2^{-}x}, &x\geq c,
\end{cases}
\end{eqnarray*}
where the coefficients $\lambda_1^{+}$, $\lambda_1^{-}$, $\lambda_2^{+}$, $\lambda_2^{-}$, $\tilde{A}_1$, $\tilde{A}_2$, $\tilde{B}_1$ and $\tilde{B}_2$ are explicitly expressed by:
\begin{equation*}
\lambda_1^{+}=-\mu_1+\sqrt{\mu_1^2+2s},~~~\lambda_1^{-}=-\mu_1-\sqrt{\mu_1^2+2s},
\end{equation*}
\begin{equation*}
\lambda_2^{+}=-\mu_2+\sqrt{\mu_2^2+2s},~~~\lambda_2^{-}=-\mu_2-\sqrt{\mu_2^2+2s},
\end{equation*}
\begin{equation*}
\tilde{A}_1=\lambda_1^{+}\frac{\lambda_2^{+}-\lambda_1^{-}}{\lambda_2^{+}-\lambda_2^{-}}e^{(\lambda_1^{-}-\lambda_2^{-})c}-
\lambda_1^{-}\frac{\lambda_2^{+}-\lambda_1^{+}}{\lambda_2^{+}-\lambda_2^{-}}e^{(\lambda_1^{+}-\lambda_2^{-})c},
\end{equation*}

\begin{equation*}
\tilde{A}_2=\lambda_1^{+}\frac{\lambda_1^{-}-\lambda_2^{-}}{\lambda_2^{+}-\lambda_2^{-}}e^{(\lambda_1^{-}-\lambda_2^{+})c}-
\lambda_1^{-}\frac{\lambda_1^{+}-\lambda_2^{-}}{\lambda_2^{+}-\lambda_2^{-}}e^{(\lambda_1^{+}-\lambda_2^{+})c},
\end{equation*}

\begin{equation*}
\tilde{B}_1=\frac{\lambda_1^{+}-\lambda_2^{-}}{\lambda_1^{+}-\lambda_1^{-}}e^{(\lambda_2^{-}-\lambda_1^{-})c},~~~ \tilde{B}_2=\frac{\lambda_2^{-}-\lambda_1^{-}}{\lambda_1^{+}-\lambda_1^{-}}e^{(\lambda_2^{-}-\lambda_1^{+})c}.
\end{equation*}
Then we can check $\tilde{\psi}_{s}(.)$ and $\tilde{\varphi}_{s}(.)$ are the unique (up to a multiplicative constant) increasing and decreasing solutions of the following ordinary differential equation (ODE)
\begin{equation*}
\mathcal{L}f(x)=\frac{1}{2}f^{\prime\prime}(x)+\mu(x)f^{\prime}(x)=s f(x), ~x>0~\textup{and}~x\neq c,
\end{equation*}
 which satisfy the boundary conditions $\mathcal{D}(\mathcal{L})=\{f:f,\mathcal{L}f\in \mathcal{C}_b([0,+\infty)),~f^{\prime}(0+)=0,~ f^{\prime}(c-)=f^{\prime}(c+)\}$.

Similarly, from \eqref{e:Gs} and \eqref{e:ws} we can get the Green function $\tilde{G}_{s}(x,y)$.
\begin{prop}
(1) for $0<x<c$,
\begin{eqnarray*}
\tilde{G}_{s}(x,y)=
\begin{cases}
\frac{2e^{2\mu_1 y}}{\tilde{\omega}_{s}}(\tilde{B}_1e^{\lambda_1^{-}x}+\tilde{B}_2e^{\lambda_1^{+}x})(\lambda_1^{+}e^{\lambda_1^{-}y}-\lambda_1^{-}e^{\lambda_1^{+}y}),&0<y\leq x, \cr \frac{2e^{2\mu_1 y}}{\tilde{\omega}_{s}}(\tilde{B}_1e^{\lambda_1^{-}y}+\tilde{B}_2e^{\lambda_1^{+}y})(\lambda_1^{+}e^{\lambda_1^{-}x}-\lambda_1^{-}e^{\lambda_1^{+}x}), &x<y<c, \cr \frac{2e^{2(\mu_1-\mu_2)c}e^{2\mu_2 y}}{\tilde{\omega}_{s}}(\lambda_1^{+}e^{\lambda_1^{-}x}-\lambda_1^{-}e^{\lambda_1^{+}x})e^{\lambda_2^{-}y},  &y\geq c,
\end{cases}
\end{eqnarray*}
\par
(2) for $x\geq c$,
\begin{eqnarray*}
\tilde{G}_{s}(x,y)=
\begin{cases}
\frac{2e^{2\mu_1 y}}{\tilde{\omega}_{s}} (\lambda_1^{+}e^{\lambda_1^{-}y}-\lambda_1^{-}e^{\lambda_1^{+}y})e^{\lambda_2^{-}x},&0<y<c, \cr \frac{2e^{2(\mu_1-\mu_2)c}e^{2\mu_2 y}}{\tilde{\omega}_{s}}(\tilde{A}_1e^{\lambda_2^{-}y}+\tilde{A}_2e^{\lambda_2^{+}y})e^{\lambda_2^{-}x}, &c\leq y\leq x, \cr \frac{2e^{2(\mu_1-\mu_2)c}e^{2\mu_2 y}}{\tilde{\omega}_{s}}(\tilde{A}_1e^{\lambda_2^{-}x}+\tilde{A}_2e^{\lambda_2^{+}x})e^{\lambda_2^{-}y},  &y>x,
\end{cases}
\end{eqnarray*}
where the Wronskian is
\begin{equation*}
\tilde{\omega}_{s}=-\lambda_1^{+}(\lambda_2^{-}-\lambda_1^{-})e^{(\lambda_2^{-}-\lambda_1^{+})c}-
\lambda_1^{-}(\lambda_1^{+}-\lambda_2^{-})e^{(\lambda_2^{-}-\lambda_1^{-})c}.
\end{equation*}

\end{prop}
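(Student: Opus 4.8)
The plan is to read $\tilde{G}_s(x,y)$ directly off the general one-dimensional diffusion Green-function representation \eqref{e:Gs}, written now for the reflected process as
\begin{equation*}
\tilde{G}_s(x,y)=\frac{m(y)}{\tilde{\omega}_s}\,\tilde{\psi}_s(x\wedge y)\,\tilde{\varphi}_s(x\vee y),
\end{equation*}
where $\tilde{\psi}_s$ in \eqref{e:tilde(psai)} and $\tilde{\varphi}_s$ are the increasing and decreasing solutions already exhibited, $m(y)$ is the speed density of $\tilde{X}$, and $\tilde{\omega}_s$ is the Wronskian \eqref{e:ws}. Since both $\tilde{\psi}_s$ and $\tilde{\varphi}_s$ solve the same eigenvalue ODE on $(0,c)$ and on $(c,\infty)$ and are glued in a $C^1$ fashion at $c$ (this smooth fit at $c$, together with the reflecting condition $\tilde{\psi}_s'(0+)=0$, is exactly what the coefficients $\tilde{A}_1,\tilde{A}_2,\tilde{B}_1,\tilde{B}_2$ are designed to enforce), the quantity $\tilde{\omega}_s=\tilde{s}(x)^{-1}(\tilde{\varphi}_s\tilde{\psi}_s'-\tilde{\varphi}_s'\tilde{\psi}_s)$ is genuinely independent of $x$, including across the break point $c$; this is the only nonroutine fact I would have to justify before substituting.

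First I would pin down $\tilde{\omega}_s$ by evaluating it at any $x\geq c$, where $\tilde{\varphi}_s(x)=e^{\lambda_2^{-}x}$ is a single exponential and $\tilde{\psi}_s(x)=\tilde{A}_1 e^{\lambda_2^{-}x}+\tilde{A}_2 e^{\lambda_2^{+}x}$. A short cancellation gives $\tilde{\varphi}_s\tilde{\psi}_s'-\tilde{\varphi}_s'\tilde{\psi}_s=(\lambda_2^{+}-\lambda_2^{-})\tilde{A}_2\, e^{(\lambda_2^{-}+\lambda_2^{+})x}$, and since $\tilde{s}(x)=e^{2(\mu_2-\mu_1)c}e^{-2\mu_2 x}$ for $x\geq c$ while $\lambda_2^{-}+\lambda_2^{+}=-2\mu_2$, the exponentials in $x$ cancel and $\tilde{\omega}_s=(\lambda_2^{+}-\lambda_2^{-})\tilde{A}_2\, e^{2(\mu_1-\mu_2)c}$.

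Next I would insert the explicit $\tilde{A}_2$ and simplify, using the Vieta relations $\lambda_i^{+}+\lambda_i^{-}=-2\mu_i$ to absorb the prefactor $e^{2(\mu_1-\mu_2)c}$ into the exponents. The two resulting terms then collapse to $-\lambda_1^{+}(\lambda_2^{-}-\lambda_1^{-})e^{(\lambda_2^{-}-\lambda_1^{+})c}-\lambda_1^{-}(\lambda_1^{+}-\lambda_2^{-})e^{(\lambda_2^{-}-\lambda_1^{-})c}$, which is the claimed $\tilde{\omega}_s$. This algebraic reduction is the one step with any content, and the point to watch is keeping track of the signs and of the exponent bookkeeping when the factor $e^{2(\mu_1-\mu_2)c}$ is rewritten through the relations $2\mu_i=-(\lambda_i^{+}+\lambda_i^{-})$.

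Finally I would tabulate the six cases. For $0<x<c$ I split according to $y\le x$, $x<y<c$, and $y\ge c$; for $x\ge c$ according to $0<y<c$, $c\le y\le x$, and $y>x$. In each case I substitute the correct piecewise branch of $\tilde{\psi}_s(x\wedge y)$ and $\tilde{\varphi}_s(x\vee y)$ together with $m(y)=2e^{2\mu_1 y}\mathbf{1}_{\{0\le y<c\}}+2e^{2(\mu_1-\mu_2)c}e^{2\mu_2 y}\mathbf{1}_{\{y\ge c\}}$, reproducing the six displayed formulas line by line. This part is pure bookkeeping in $x\wedge y$ and $x\vee y$; the only thing to verify is that in each subcase the argument lands in the region for which the chosen branch of $m$ and of the two solutions is valid, so that $\tilde{\psi}_s$ and $\tilde{\varphi}_s$ are each evaluated on the correct piece.
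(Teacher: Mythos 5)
Your proposal is correct and follows exactly the route the paper takes: the paper offers no argument beyond invoking the general representation \eqref{e:Gs} and the Wronskian formula \eqref{e:ws} for the pair $(\tilde{\psi}_s,\tilde{\varphi}_s)$, which is precisely your substitution-plus-bookkeeping plan, and your evaluation of $\tilde{\omega}_s=(\lambda_2^{+}-\lambda_2^{-})\tilde{A}_2e^{2(\mu_1-\mu_2)c}$ on $[c,\infty)$ does reduce to the stated expression via $\lambda_i^{+}+\lambda_i^{-}=-2\mu_i$. Your side remark that the smooth fit at $c$ (together with the continuity of $\tilde{s}$ there) is what makes the Wronskian constant across the break point is the one point the paper leaves implicit, and you handle it correctly.
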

Thus the transition density function $\tilde{p}(t;x,y)$ of $\tilde{X}$ is denoted by
\begin{equation}\label{e:tp}
\tilde{p}(t;x,y)=\mathcal{L}^{-1}(\tilde{G}_{s}(x,y))(t).
\end{equation}

\section{Hitting problems of BM with broken drift and RBM with broken drift}\label{section3}
\par
\subsection{Hitting problem of BM with broken drift}
In this subsection, we shall study the first passage time $\tau$ defined in \eqref{e:tau}. As in \citet{MR2122800}, we will compute the joint Laplace transform of the random vector $(X_{\tau},\tau)$, i.e.,
\begin{equation*}
\Psi(\alpha,\theta;x)=\mathbf{E}_x\left[e^{-\alpha X_{\tau}-\theta\tau}\right],
\end{equation*}
where $\mathbf{E}_x\left[\cdot\right]=\mathbf{E}\left[\cdot~|~X_0=x\right]$. For this purpose, we are first to compute the following four Laplace transforms:
\begin{align*}
\Phi_1(\alpha,\theta;x)&=\mathbf{E}_x\left[e^{-\alpha X_{T_1}-\theta T_1}\mathbf{1}_{\{X_{T_1<c}\}}\right],\\
\Phi_2(\alpha,\theta;x)&=\mathbf{E}_x\left[e^{-\alpha X_{T_1}-\theta T_1}\mathbf{1}_{\{X_{T_1\geq c}\}}\right],\\
\Phi_3(\alpha,\theta;x)&=\mathbf{E}_x\left[e^{-\alpha X_{T_1}-\theta T_1}\mathbf{1}_{\{X_{T_1< c}\}}\mathbf{1}_{\{\tau<T_1\}}\right],\\
\Phi_4(\alpha,\theta;x)&=\mathbf{E}_x\left[e^{-\alpha X_{T_1}-\theta T_1}\mathbf{1}_{\{X_{T_1\geq c}\}}\mathbf{1}_{\{\tau<T_1\}}\right].
\end{align*}
On the other hand, we need these two identities:
\begin{align}\label{e:w1}
\omega_1(\theta;b,x)&=\mathbf{E}_x\left[e^{-\theta R_0}\mathbf{1}_{\{R_0<R_b\}}\right],\\\label{e:w2}
\omega_2(\theta;b,x)&=\mathbf{E}_x\left[e^{-\theta R_b}\mathbf{1}_{\{R_0>R_b\}}\right],
\end{align}
where $R_a$ denotes the first passage time of the stochastic process $X=\{X_t: t\geq0\}$ over a constant boundary, i.e., $R_a=\inf{\{t\geq 0: X_t=a\}}$ for $a\in \mathcal{R}^1$.

\par
If $0<x<b$, since the function $\psi_{\theta}(x)$ and $\varphi_{\theta}(x)$ are the solutions to the ordinary differential equation (ODE) \eqref{e:ode1}, applying It\^o's formula and the optional stopping theorem yields:
\begin{align*}
\mathbf{E}_x\left[e^{-\theta (R_0\wedge R_b)}\psi_{\theta}(X_{R_0\wedge R_b})\right]=\psi_{\theta}(x),\\
\mathbf{E}_x\left[e^{-\theta (R_0\wedge R_b)}\varphi_{\theta}(X_{R_0\wedge R_b})\right]=\varphi_{\theta}(x).
\end{align*}
So we can obtain
\begin{align}\label{2.1}
\psi_{\theta}(x)&=\psi_{\theta}(0)\omega_1(\theta;b,x)+\psi_{\theta}(b)\omega_2(\theta;b,x),\\\label{2.2}
\varphi_{\theta}(x)&=\varphi_{\theta}(0)\omega_1(\theta;b,x)+\varphi_{\theta}(b)\omega_2(\theta;b,x).
\end{align}
Solving $\omega_1(\theta;b,x)$, $\omega_2(\theta;b,x)$ in \eqref{2.1} and \eqref{2.2}, we yield the expression of $\omega_1(\theta;b,x)$, $\omega_2(\theta;b,x)$:

\begin{equation}\label{omega1}
\omega_1(\theta;b,x)=\frac{\varphi_{\theta}(x)\psi_{\theta}(b)-\psi_{\theta}(x)\varphi_{\theta}(b)}
                        {\varphi_{\theta}(0)\psi_{\theta}(b)-\psi_{\theta}(0)\varphi_{\theta}(b)}
\end{equation}
and
\begin{equation}\label{omega2}
\omega_2(\theta;b,x)=\frac{\varphi_{\theta}(x)\psi_{\theta}(0)-\psi_{\theta}(x)\varphi_{\theta}(0)}
                        {\varphi_{\theta}(b)\psi_{\theta}(0)-\psi_{\theta}(b)\varphi_{\theta}(0)}.
\end{equation}

We introduce the following time-dependent Meyer-Tanaka formula derived in \citet{MR2409002}.

\begin{lemma}\label{le:4}
If $f(t,x)$ satisfies the following conditions.
\par
(1) $f$ is absolutely continuous in each variable.
\par
(2) $\partial_t^- f$ and $\partial_x^- f$ exist, are left-continuous and locally bounded.
\par
(3) $\partial_x^- f$ is of locally bounded variation in $\mathbb{R}_+\times \mathbb{R}$ and $\partial_x^- f(0,\cdot)$ is of locally bounded variation in $\mathbb{R}$.
\par
Then we have the following extension meyer-tanaka formula to time-dependent functions:

\begin{align*}
f(t,X_t)&=f(0,X_0)+\int_0^t \partial_t^- f(s,X_s) ds+\int_0^t \partial_x^- f(s,X_s) dX_s\\\nonumber
&+\int_{\mathbb{R}}L^X(t,y) d_y \partial_x^- f(t,y)-\int_{\mathbb{R}}\int_0^t L^X(s,y) d_{s,y} \partial_x^- f(s,y),
\end{align*}
where the local time is defined by the limit in probability:
\begin{equation}
L^X(t,y)=\frac{1}{2}\int_0^t \delta(X_s-y)d\langle{X}\rangle_s
\end{equation}
where $\delta(x)=0,~(x\neq0)$ and $\int_{-\infty}^{\infty} \delta(x)dx=1$. The notations $d_y$ and $d_{s,y}$ mean integration with respect to the $y$ variable and the $(s,y)$ variables, respectively.
\end{lemma}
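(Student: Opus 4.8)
The plan is to reduce the formula to the smooth case, where it is nothing more than a rearrangement of the classical time-dependent It\^o formula, and then to remove the smoothness by a mollification argument. First I would prove the identity for $f\in C^{1,2}(\mathbb{R}_+\times\mathbb{R})$ with all derivatives locally bounded. For such $f$ the ordinary time-dependent It\^o formula gives
\begin{equation*}
f(t,X_t)=f(0,X_0)+\int_0^t \partial_t f(s,X_s)\,ds+\int_0^t \partial_x f(s,X_s)\,dX_s+\frac{1}{2}\int_0^t \partial_{xx} f(s,X_s)\,d\langle X\rangle_s ,
\end{equation*}
and since $\partial_t^- f=\partial_t f$ and $\partial_x^- f=\partial_x f$ here, the first three terms already coincide with those in the lemma. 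The whole content of the statement is therefore to rewrite the curvature term as the two local-time integrals.

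For this I would use the occupation-times formula in the normalisation of the lemma, $L^X(t,y)=\frac{1}{2}\int_0^t\delta(X_s-y)\,d\langle X\rangle_s$, which reads $\frac{1}{2}\int_0^t g(s,X_s)\,d\langle X\rangle_s=\int_{\mathbb{R}}\int_0^t g(s,y)\,L^X(ds,y)\,dy$ for bounded measurable $g$, where $L^X(ds,y)$ denotes the Stieltjes differential in $s$ of the increasing map $s\mapsto L^X(s,y)$. Taking $g=\partial_{xx}f$ and using $\partial_{xx}f(s,y)\,dy=d_y\,\partial_x^- f(s,y)$ converts the curvature term into the bi-measure integral $\int_{\mathbb{R}}\int_0^t L^X(ds,y)\,d_y\partial_x^- f(s,y)$. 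I would then integrate by parts in the time variable: for each fixed $y$ the map $s\mapsto L^X(s,y)$ is continuous and increasing with $L^X(0,y)=0$, while $s\mapsto\partial_x^- f(s,y)$ is of bounded variation, so Stieltjes integration by parts in $s$ followed by integration of the spatial increments in $y$ produces the terminal boundary term $\int_{\mathbb{R}}L^X(t,y)\,d_y\partial_x^- f(t,y)$ and the double integral $-\int_{\mathbb{R}}\int_0^t L^X(s,y)\,d_{s,y}\partial_x^- f(s,y)$, the contribution at $s=0$ vanishing because $L^X(0,\cdot)\equiv 0$. The mixed differential $d_{s,y}\partial_x^- f$ is precisely the joint bounded-variation measure whose existence is guaranteed by condition~(3). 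This establishes the formula for smooth $f$.

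To remove the smoothness, I would mollify, setting $f_n=f*\rho_n\in C^{1,2}$ by convolution in $(t,x)$, so that the formula holds for each $f_n$, and then let $n\to\infty$. Using the local boundedness in condition~(2), $f_n\to f$ and $\partial_t^- f_n\to\partial_t^- f$, $\partial_x^- f_n\to\partial_x^- f$ at continuity points, so the drift integral converges by dominated convergence and the stochastic integral converges in probability by the dominated-convergence theorem for stochastic integrals. For the two local-time terms I would exploit that, for fixed $t$, $y\mapsto L^X(t,y)$ is almost surely continuous with compact support, and that the measures $d_y\partial_x^- f_n(t,\cdot)$ and $d_{s,y}\partial_x^- f_n$ converge weakly on compacts to $d_y\partial_x^- f(t,\cdot)$ and $d_{s,y}\partial_x^- f$; pairing weak convergence of these measures against the continuous, compactly supported local time then yields convergence of both integrals.

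The main obstacle is exactly this last passage to the limit in the two local-time integrals. One must control the joint $(s,y)$ variation of $\partial_x^- f_n$ uniformly in $n$, so that the total masses of $d_{s,y}\partial_x^- f_n$ remain bounded on compacts, and one must justify interchanging the weak convergence of these (deterministic) Stieltjes measures with the random local time $L^X$. This requires the almost-sure joint continuity of $(s,y)\mapsto L^X(s,y)$ together with a uniform-integrability argument to upgrade the almost-sure/weak convergence to convergence in probability of the integrals; the first three terms, by contrast, present only routine difficulties.
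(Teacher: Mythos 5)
The paper does not actually prove this lemma: it is imported from the cited reference \citet{MR2409002} with no in-paper argument, so there is nothing internal to compare your proof against. That said, your outline is essentially the standard (and the cited source's) strategy: establish the identity for $f\in C^{1,2}$ by combining the classical time-dependent It\^o formula with the occupation-times formula and a Stieltjes integration by parts in the time variable, and then mollify. Your reduction of $\frac{1}{2}\int_0^t\partial_{xx}f(s,X_s)\,d\langle X\rangle_s$ to the two local-time integrals is correct, including the signs and the vanishing of the $s=0$ boundary contribution, and you have correctly located the genuinely hard step, namely the passage to the limit in the two local-time integrals, which needs uniform-on-compacts control of the total variation of $\partial_x^- f_n$ together with the almost sure joint continuity and compact support in $y$ of the local time. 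Two details deserve explicit attention if the argument were written out in full: (i) the mollifier must be one-sided (supported on the positive half-axis in each variable) so that $\partial_t f_n$ and $\partial_x f_n$ converge to the left derivatives $\partial_t^- f$ and $\partial_x^- f$ rather than to a symmetric average at discontinuity points; and (ii) joint continuity of $(s,y)\mapsto L^X(s,y)$ is not automatic for general continuous semimartingales --- the jump of $L^X(t,\cdot)$ at $y$ equals $2\int_0^t\mathbf{1}_{\{X_s=y\}}\,dV_s$ for the finite-variation part $V$ --- but it does hold for the processes of this paper because the drift is bounded and the occupation measure of a single level is null. With these points made precise, your sketch is a faithful reconstruction of the proof the paper delegates to the literature.
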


According to the Theorem 2.1 of \citet{MR2884811} and Theorem 3.1 of \citet{MR2122800}, we have the following result.

\begin{theorem}\label{theorem3.1}
Let $0<x<b$. Then the joint LT of $(X_{\tau},\tau)$ is given by
\begin{equation}\label{e:M}
\Psi(\alpha,\theta;x)=\omega_1(\lambda+\theta;b,x)+e^{-\alpha b}\omega_2(\lambda+\theta;b,x)+\mathbf{E}_x[e^{-\theta T_1}M(\theta,X_{T_1},Y_1)\mathbf{1}_{\{\tau\geq T_1\}}],
\end{equation}
where the function $M$ is defined by
\begin{equation*}
M(\theta,x,y)=\omega_1(\lambda+\theta;b+y,x)+e^{-\alpha(b+y)}\omega_2(\lambda+\theta;b+y,x), ~y>0\;,
\end{equation*}
the functions  $\omega_1$ and $\omega_2$ are defined in \eqref{omega1} and \eqref{omega2}. The last term on the right side  of \eqref{e:M} can be determined by the following Proposition \ref{prop:1} and Proposition \ref{prop:2}.
\end{theorem}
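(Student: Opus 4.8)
The plan is to condition on the first jump time $T_1$ of the boundary and exploit the strong Markov property of $X$ at $T_1$, splitting the event $\{\tau<\infty\}$ according to whether the process is absorbed before or after the boundary moves. Before $T_1$, the boundary is the constant level $b$ (together with the lower level $0$), so the first passage problem reduces to the two-sided exit problem already solved via $\omega_1$ and $\omega_2$ in \eqref{omega1}--\eqref{omega2}. Concretely, I would first decompose
\begin{equation*}
\Psi(\alpha,\theta;x)=\mathbf{E}_x\!\left[e^{-\alpha X_\tau-\theta\tau}\mathbf{1}_{\{\tau<T_1\}}\right]+\mathbf{E}_x\!\left[e^{-\alpha X_\tau-\theta\tau}\mathbf{1}_{\{\tau\ge T_1\}}\right].
\end{equation*}

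For the first term, on $\{\tau<T_1\}$ the boundary $C(t)$ still equals $b$, so $X_\tau\in\{0,b\}$ and $\tau=R_0\wedge R_b$. Since $T_1$ is exponential with parameter $\lambda$ and independent of $W$, the factor $\mathbf{1}_{\{\tau<T_1\}}$ integrates out against the exponential density to shift the Laplace argument from $\theta$ to $\lambda+\theta$; that is, $\mathbf{E}_x[e^{-\theta\tau}\mathbf{1}_{\{\tau<T_1\}}\,|\,\mathcal F_\infty^W]=e^{-(\lambda+\theta)(R_0\wedge R_b)}$. Separating the two possible exit locations and using the definitions \eqref{e:w1}--\eqref{e:w2} with the absorption value $X_\tau=0$ contributing $e^{-\alpha\cdot 0}=1$ and $X_\tau=b$ contributing $e^{-\alpha b}$, this term becomes exactly $\omega_1(\lambda+\theta;b,x)+e^{-\alpha b}\omega_2(\lambda+\theta;b,x)$, the first two summands of \eqref{e:M}.

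For the second term, on $\{\tau\ge T_1\}$ the process has survived up to the jump, at which instant the boundary moves to $b+Y$ where $Y\sim F$. I would apply the strong Markov property at $T_1$: conditionally on $\mathcal F_{T_1}$, the post-jump evolution is a fresh copy of $X$ started at $X_{T_1}$, and the remaining first passage time over the new fixed upper level $b+Y$ (and lower level $0$) again solves a two-sided exit problem. This identifies the conditional contribution as precisely $M(\theta,X_{T_1},Y)=\omega_1(\lambda+\theta;b+Y,X_{T_1})+e^{-\alpha(b+Y)}\omega_2(\lambda+\theta;b+Y,X_{T_1})$, after pulling out the factor $e^{-\theta T_1}$ and integrating over the independent $Y$. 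Averaging over $\mathcal F_{T_1}$ yields the last expectation in \eqref{e:M}. The main obstacle is justifying the optional-stopping / martingale identities behind $\omega_1,\omega_2$ in the presence of the broken drift: the generator has a discontinuity at $c$, so the ordinary It\^o--Tanaka formula does not directly apply and I must instead invoke the time-dependent Meyer--Tanaka formula of Lemma \ref{le:4}, checking that $\psi_\theta$ and $\varphi_\theta$ (whose first derivatives match across $c$ by construction, while the second derivative jumps) satisfy its regularity hypotheses so that the local-time correction terms at $c$ vanish and the stopped processes $e^{-\theta t}\psi_\theta(X_t)$, $e^{-\theta t}\varphi_\theta(X_t)$ are genuine local martingales. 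Once this is established, a standard localization and dominated-convergence argument removes the stopping approximation and the decomposition above assembles into \eqref{e:M}.
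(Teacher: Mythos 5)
Your overall strategy---conditioning on $T_1$, splitting on $\{\tau<T_1\}$ versus $\{\tau\geq T_1\}$, and reducing each piece to the two-sided exit identities $\omega_1,\omega_2$---is exactly the argument underlying the theorem; the paper itself does not write this decomposition out (it is stated ``according to'' \citet{MR2884811} and \citet{MR2122800}) and instead supplies the computable ingredients in Propositions \ref{prop:1} and \ref{prop:2}. Your treatment of the first term is correct: on $\{\tau<T_1\}$ one has $\tau=R_0\wedge R_b$, and integrating the independent exponential out of $\mathbf{1}_{\{R_0\wedge R_b<T_1\}}$ shifts the Laplace argument from $\theta$ to $\lambda+\theta$, producing $\omega_1(\lambda+\theta;b,x)+e^{-\alpha b}\omega_2(\lambda+\theta;b,x)$.

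The gap is in the second term, and it is a real one. By your own reasoning, after $T_1$ the boundary is frozen at $b+Y$ forever (the boundary $C(t)=b+Y\mathbf{1}_{\{T_1\leq t\}}$ has a single jump), so the strong Markov property at $T_1$ yields the plain two-sided exit transform $\omega_1(\theta;b+Y,X_{T_1})+e^{-\alpha(b+Y)}\omega_2(\theta;b+Y,X_{T_1})$, evaluated at $\theta$ with no additional killing. You then assert that this ``identifies the conditional contribution as precisely $M(\theta,X_{T_1},Y)$,'' but $M$ is defined with argument $\lambda+\theta$. The extra $\lambda$ can only arise from requiring the exit to occur before a \emph{second} boundary jump, i.e.\ from the compound-Poisson setting of \citet{MR2122800}, where the corresponding formula is recursive and carries an additional term for the complementary event. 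So either the theorem's $\lambda+\theta$ in $M$ is an artifact of that setting (a typo for $\theta$ under the single-jump boundary as defined here), or your post-$T_1$ analysis is missing a further conditioning step together with the extra term it generates; as written, your derivation and your stated conclusion do not match, and the proof does not close. A minor further remark: the martingale property of $e^{-\theta t}\psi_\theta(X_t)$ and $e^{-\theta t}\varphi_\theta(X_t)$ requires only the ordinary It\^o--Tanaka formula, since $\psi_\theta,\varphi_\theta$ are $C^1$ with absolutely continuous derivative across $c$; the time-dependent Meyer--Tanaka formula of Lemma \ref{le:4} is reserved in the paper for the genuinely non-$C^1$ function $h(t,x)$ appearing in the proof of Proposition \ref{prop:1}, not for the exit identities \eqref{omega1}--\eqref{omega2}.
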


Next, we are ready to prove the Theorem \ref{theorem3.1}. By using the Lemma \ref{le:4}, we can obtain the LTs evaluated at the exponential random time $T_1$, i.e., $\Phi_1(\alpha,\theta;x)$ and $\Phi_2(\alpha,\theta;x)$.

\begin{prop}\label{prop:1}
Let $x \in \mathbb{R}$ and $\alpha, \theta>0$. Then
\begin{align}
\Phi_{1}(\alpha,\theta;x)&=\frac{\lambda(e^{-\alpha x}\mathbf{1}_{\{x<c\}}+e^{-\alpha c}\mathbf{1}_{\{x\geq c\}})-(\lambda+\theta)e^{-\alpha c}g(\theta;x)}{\lambda+\theta+\alpha\mu_1-\frac{1}{2}\alpha^2},\label{e:00}\\
\Phi_{2}(\alpha,\theta;x)&=\frac{\lambda(e^{-\alpha x}\mathbf{1}_{\{x\geq c\}}-e^{-\alpha c}\mathbf{1}_{\{x\geq c\}})+(\lambda+\theta)e^{-\alpha c}g(\theta;x)}{\lambda+\theta+\alpha\mu_2-\frac{1}{2}\alpha^2},\label{e:01}
\end{align}
where
\begin{equation}\label{e:g}
g(\theta;x)=\mathbf{E}_x \left[e^{-\theta T_1}\int_c^{\infty}p(T_1;x,y)dy \right]
\end{equation}
and $p(t;x,y)$ is the transition density function for the BM with broken drift (i.e. stochastic process $X$) that can be obtained in \eqref{e:p}.
\end{prop}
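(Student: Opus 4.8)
The plan is to turn the memoryless time $T_1$ into a resolvent and then read off $\Phi_1,\Phi_2$ from the time-dependent Meyer--Tanaka formula of Lemma~\ref{le:4}, applied to a carefully chosen, non-smooth test function. Since $T_1\sim\mathrm{Exp}(\lambda)$ is independent of $W$, conditioning on $T_1$ and integrating against its density gives, for bounded $h$, the randomization identity $\mathbf{E}_x[e^{-\theta T_1}h(X_{T_1})]=\lambda\int_0^\infty e^{-(\lambda+\theta)s}\mathbf{E}_x[h(X_s)]\,ds$. Taking $h(y)=e^{-\alpha y}\mathbf{1}_{\{y<c\}}$ and $h(y)=e^{-\alpha y}\mathbf{1}_{\{y\geq c\}}$ exhibits $\Phi_1$ and $\Phi_2$ as $\lambda\int G_{\lambda+\theta}(x,y)h(y)\,dy$ via \eqref{e:Gs}, while $h=\mathbf{1}_{\{y\geq c\}}$ shows that the $g$ of \eqref{e:g} is precisely $\mathbf{E}_x[e^{-\theta T_1}\mathbf{1}_{\{X_{T_1}\geq c\}}]=\lambda\int_c^\infty G_{\lambda+\theta}(x,y)\,dy$. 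I would keep $s=\lambda+\theta$ as a running parameter, since the same identity applied to $e^{-\alpha X_s}$ will generate the common denominators.

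The core step is to apply Lemma~\ref{le:4} to $f(t,x)=e^{-\theta t}\,e^{-\alpha(x\wedge c)}$, which is continuous and piecewise smooth with a single kink at $x=c$. On $\{x<c\}$ one has $\partial_t f+\mathscr{A}f=-(\theta+\alpha\mu_1-\tfrac12\alpha^2)f$, on $\{x\geq c\}$ the spatial part vanishes and $\partial_t f=-\theta e^{-\theta t}e^{-\alpha c}$, while the jump of $\partial_x^- f$ across $c$ equals $\alpha e^{-\alpha c}$, multiplying the local time $L^X(\cdot,c)$; since $d\langle X\rangle_s=ds$, the $dX$-integral is a martingale. Evaluating at $t=T_1$, taking $\mathbf{E}_x$, discarding the martingale (its mean is zero for $\alpha,\theta>0$ after the usual localization), and converting every occupation integral by $\mathbf{E}_x[\int_0^{T_1}e^{-\theta s}H_s\,ds]=\mathbf{E}_x[\int_0^\infty e^{-(\lambda+\theta)s}H_s\,ds]$ replaces the time-integrals by $\tfrac1\lambda\Phi_1$ and $\tfrac1\lambda g$, giving a single linear relation for $\Phi_1$ with denominator $\lambda+\theta+\alpha\mu_1-\tfrac12\alpha^2$. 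Repeating with $f(t,x)=e^{-\theta t}e^{-\alpha(x\vee c)}$ yields the companion relation for $\Phi_2$ with denominator $\lambda+\theta+\alpha\mu_2-\tfrac12\alpha^2$; adding them recovers the smooth-It\^o identity $(\lambda+\theta+\alpha\mu_1-\tfrac12\alpha^2)\Phi_1+(\lambda+\theta+\alpha\mu_2-\tfrac12\alpha^2)\Phi_2=\lambda e^{-\alpha x}$, which is a useful consistency check and shows the local-time contributions cancel in the sum.

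The main obstacle is exactly that local-time term at $c$: because it cancels in the sum but not individually, it is the sole ingredient separating $\Phi_1$ from $\Phi_2$, and its coefficient must be pinned down exactly. Its expectation $\mathbf{E}_x[\int_0^{T_1}e^{-\theta s}\,dL^X(s,c)]$ is, through the occupation-density meaning of $L^X$, a constant multiple of the resolvent density $G_{\lambda+\theta}(x,c)$, and the delicate point is to verify that, once combined with the boundary value $e^{-\alpha c}$ produced by the kink, it assembles into precisely the single function $g$ of \eqref{e:00}--\eqref{e:01}. Here I would substitute the explicit $G_{s}(x,c)$ from Section~\ref{section2} and use the $C^1$-matching at $c$ encoded in $\mathcal{D}(\mathcal{A})$; I would track the factor of $\alpha$ and the sign in this term with particular care, since a misplaced factor is the easiest mistake to make precisely where the broken drift ($\mu_1$ versus $\mu_2$) meets the discontinuity of the integrand. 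As an independent confirmation of the final coefficients I would also solve the resolvent ODEs $\tfrac12\Phi_i''+\mu\Phi_i'-(\lambda+\theta)\Phi_i=-\lambda e^{-\alpha x}\mathbf{1}_{\{\cdot\}}$ directly, imposing decay at $\pm\infty$ and $C^1$-matching at $c$, and compare with the probabilistic formulas \eqref{e:00}--\eqref{e:01}.
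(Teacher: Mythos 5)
Your route is the paper's route: randomize $T_1$ into the resolvent $G_{\lambda+\theta}$, apply Lemma~\ref{le:4} to $f(t,x)=e^{-\theta t}e^{-\alpha(x\wedge c)}$ (this is exactly the paper's test function $h(t,x)$), convert the occupation integrals by independence of $T_1$, and obtain a linear relation for $\Phi_1$ with denominator $\lambda+\theta+\alpha\mu_1-\frac12\alpha^2$; deriving $\Phi_2$ from a second Tanaka application to $e^{-\theta t}e^{-\alpha(x\vee c)}$ rather than by subtraction from the smooth It\^o identity \eqref{e:sx2} is an immaterial variant, since the sum of your two relations is that identity. The one substantive point where you and the paper part company is the atom of the measure $d_y\partial_x^- f(t,y)$ at $y=c$: you correctly keep the contribution $\alpha e^{-\alpha c}\int_0^t e^{-\theta s}\,dL^X(s,c)$ produced by the jump of $\partial_x^- f$ across $c$, whereas the paper's displayed martingale $N_t$ retains only the absolutely continuous part $\alpha^2 e^{-\alpha y}\mathbf{1}_{\{y\leq c\}}dy$ of that measure and silently drops the atom.

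However, you then defer precisely this term, saying the ``delicate point is to verify that \dots it assembles into precisely the single function $g$.'' It does not, and this is a genuine gap: the term is neither a multiple of $g$ nor does it cancel. Carrying your own computation to the end, $\mathbf{E}_x\bigl[\int_0^{T_1}e^{-\theta s}\,dL^X(s,c)\bigr]=\tfrac12 G_{\lambda+\theta}(x,c)$ under the normalization of Lemma~\ref{le:4}, and the relation you obtain is $(\lambda+\theta+\alpha\mu_1-\tfrac12\alpha^2)\Phi_1=\lambda e^{-\alpha(x\wedge c)}-(\lambda+\theta)e^{-\alpha c}g(\theta;x)+\tfrac12\lambda\alpha e^{-\alpha c}G_{\lambda+\theta}(x,c)$, i.e.\ \eqref{e:00} with an extra summand in the numerator (with the matching correction propagating to \eqref{e:01} through \eqref{e:sx2}). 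The discrepancy is real, not a bookkeeping slip: take $\mu_1=\mu_2=0$ and $x=c$, so that $X$ is standard Brownian motion with $G_q(x,y)=e^{-\sqrt{2q}\,|x-y|}/\sqrt{2q}$, $q=\lambda+\theta$; a direct computation gives $\Phi_1=\lambda e^{-\alpha c}/\bigl(\sqrt{2q}(\sqrt{2q}-\alpha)\bigr)$, while \eqref{e:00} gives $\lambda e^{-\alpha c}/(2q-\alpha^2)$, and these differ for every $\alpha>0$, whereas the corrected numerator reproduces the former. So your plan cannot be completed into a proof of the statement as written; indeed your own proposed cross-check (solving the resolvent ODE with $C^1$ matching at $c$) would surface the same extra $G_{\lambda+\theta}(x,c)$ term. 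You must either exhibit a cancellation of the $L^X(\cdot,c)$ contribution --- there is none --- or conclude that \eqref{e:00}--\eqref{e:01} need to carry the additional local-time term.
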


\begin{proof}
By using It\^o's formula to $e^{-\alpha X_t-\theta t}$, we have
\begin{equation*}
M_t=e^{-\alpha X_t-\theta t}-e^{-\alpha X_0}+(\theta-\frac{1}{2}\alpha^2)\int_0^t e^{-\alpha X_s-\theta s}ds+\alpha \int_0^t \mu(X_s)e^{-\alpha X_s-\theta s}ds
\end{equation*}
is a martingale. Now, applying the optional stopping time theorem to $M$ and $T_1$, we obtain
\begin{equation}\label{e:zhu1}
\Phi_{1}(\alpha,\theta;x)+\Phi_{2}(\alpha,\theta;x)-e^{-\alpha x}+(\theta-\frac{1}{2}\alpha^2)\mathbf{E}_x \left[\int_0^{T_1} e^{-\alpha X_s-\theta s}ds\right]+\alpha \mathbf{E}_x \left[\int_0^{T_1} \mu(X_s)e^{-\alpha X_s-\theta s}ds\right]=0.
\end{equation}
Since the exponential random variable $T_1$ is independent of the process $X$, we have
\begin{equation*}
\mathbf{E}_x \left[\int_0^{T_1} e^{-\alpha X_s-\theta s}ds \right]=\frac{1}{\lambda}(\Phi_{1}(\alpha,\theta;x)+\Phi_{2}(\alpha,\theta;x))
\end{equation*}
and
\begin{equation*}
\mathbf{E}_x\left[\int_0^{T_1} \mu(X_s)e^{-\alpha X_s-\theta s}ds\right]=\frac{1}{\lambda}\mathbf{E}_x\left[ \mu(X_{T_1})e^{-\alpha X_{T_1}-\theta T_1}\right].
\end{equation*}
Thus, we yield the following equation
\begin{equation}\label{e:sx2}
\frac{\lambda+\theta+\alpha\mu_1-\frac{1}{2}\alpha^2}{\lambda}\Phi_{1}(\alpha,\theta;x)=e^{-\alpha x}-\frac{\lambda+\theta+\alpha\mu_2-\frac{1}{2}\alpha^2}{\lambda}\Phi_{2}(\alpha,\theta;x).
\end{equation}
Define the function $h(t,x)$ by $h(t,x)=e^{-\alpha x-\theta t}\mathbf{1}_{\{x<c\}}+e^{-\alpha c-\theta t}\mathbf{1}_{\{x\geq c\}}$. Obviously, $h(t,x)$ satisfies the conditions in Lemma \ref{le:4}, as a consequence, we yield that
\begin{align*}
N_t&=e^{-\alpha X_t-\theta t}\mathbf{1}_{\{X_t<c\}}+e^{-\alpha c-\theta t}\mathbf{1}_{\{X_t\geq c\}}-e^{-\alpha x}\mathbf{1}_{\{x<c\}}-e^{-\alpha c}\mathbf{1}_{\{x\geq c\}}\\\nonumber
&+(\alpha\mu_1+\theta)\int_0^t e^{-\alpha X_s-\theta s}\mathbf{1}_{\{X_s\leq c\}}ds+\theta\int_0^t e^{-\alpha c-\theta s}\mathbf{1}_{\{X_s>c\}}ds-\\\nonumber
&\alpha^2\int_{\mathbb{R}}L^X(t,y) e^{-\alpha y-\theta t}\mathbf{1}_{\{y\leq c\}}dy-\alpha^2\theta\int_{\mathbb{R}}\int_0^t L^X(s,y) e^{-\alpha y-\theta s}\mathbf{1}_{\{y\leq c\}}dsdy,
\end{align*}
is a martingale. By using the occupation times formula, we obtain
\begin{equation*}
\int_{\mathbb{R}}L^X(t,y) e^{-\alpha y-\theta t}\mathbf{1}_{\{y\leq c\}}dy=\frac{1}{2}e^{-\theta t}\int_0^t e^{-\alpha X_s}\mathbf{1}_{\{X_s\leq c\}}ds
\end{equation*}
and similarly
\begin{equation*}
\int_{\mathbb{R}}\int_0^t L^X(s,y) e^{-\alpha y-\theta s}\mathbf{1}_{(y\leq c)}dsdy=\frac{1}{2}\int_0^t \int_0^s e^{-\alpha X_u}\mathbf{1}_{\{X_u\leq c\}}du e^{-\theta s}ds.
\end{equation*}
Then, applying the optional stopping time theorem to $N$ and $T_1$, we obtain
\begin{align}\label{e:sx}
&\mathbf{E}_x\left[e^{-\alpha X_{T_1}-\theta T_1}\mathbf{1}_{\{X_{T_1}<c\}}\right]+\mathbf{E}_x\left[e^{-\alpha c-\theta T_1}\mathbf{1}_{\{X_{T_1}\geq c\}}\right]=e^{-\alpha x}\mathbf{1}_{\{x<c\}}+e^{-\alpha c}\mathbf{1}_{\{x\geq c\}}-\frac{\alpha\mu_1+\theta}{\lambda}\nonumber\\
&\mathbf{E}_x\left[ e^{-\alpha X_{T_1}-\theta T_1}\mathbf{1}_{\{X_{T_1}< c\}}\right]-\frac{\theta e^{-\alpha c}}{\lambda}\mathbf{E}_x\left[e^{-\theta T_1}\mathbf{1}_{\{X_{T_1}\geq c\}}\right]+\frac{\alpha^2}{2}\frac{\lambda+\theta}{\lambda}\mathbf{E}_x\left[ e^{-\theta T_1}\int_0^{T_1} e^{-\alpha X_s} \mathbf{1}_{\{X_s\leq c\}}ds\right].
\end{align}
Note that we use two facts in the last equality. One is that $T_1$ is an exponential random variable which is independent of process $X$ and the other is $\mathbf{E}_x\left[e^{-\alpha X_{T_1}-\theta T_1}\mathbf{1}_{\{X_{T_1}= c\}}\right]=0$ and $\mathbf{E}_x\left[e^{-\theta T_1}\mathbf{1}_{\{X_{T_1}= c\}}\right]=0$. In addition, because of the definition of the function $g(\theta;x)$, i.e., \eqref{e:g}, we have
\begin{equation*}
g(\theta;x)=\mathbf{E}_x\left[e^{-\theta T_1}\mathbf{1}_{\{X_{T_1}\geq c\}}\right].
\end{equation*}
Moreover,
\begin{align*}
\mathbf{E}_x\left[ e^{-\theta T_1}\int_0^{T_1} e^{-\alpha X_s} \mathbf{1}_{\{X_s\leq c\}}ds\right]=&\mathbf{E}_x\left[\int_0^{T_1} \int_0^t e^{-\alpha X_s}\mathbf{1}_{\{X_s\leq c\}}dsde^{-\theta t}+\int_0^{T_1}e^{-\theta t} e^{-\alpha X_t}\mathbf{1}_{\{X_t\leq c\}}dt\right]\\\nonumber
=&\mathbf{E}_x\left[ -\int_0^{T_1} \theta e^{-\theta t}\int_0^t e^{-\alpha X_s}\mathbf{1}_{\{X_s\leq c\}}dsdt\right]+\frac{1}{\lambda}\Phi_{1}(\alpha,\theta;x)\\\nonumber
=&-\frac{\theta}{\lambda}\mathbf{E}_x\left[ e^{-\theta T_1}\int_0^{T_1} e^{-\alpha X_s} \mathbf{1}_{\{X_s\leq c\}}ds\right]+\frac{1}{\lambda}\Phi_{1}(\alpha,\theta;x).
\end{align*}
Hence we have
\begin{equation}\label{e:sx1}
\mathbf{E}_x\left[ e^{-\theta T_1}\int_0^{T_1} e^{-\alpha X_s} \mathbf{1}_{\{X_s\leq c\}}ds\right]=\frac{1}{\lambda+\theta}\Phi_{1}(\alpha,\theta;x).
\end{equation}
Combining \eqref{e:sx} and \eqref{e:sx1}, we arrive at \eqref{e:00}. Finally we can derive \eqref{e:01} from \eqref{e:00} and \eqref{e:sx2}.
\end{proof}

\begin{remark}
If there is no broken phenomenon in drift term, the equation \eqref{e:zhu1} will be same as the equation ($3.17$) in \citet{MR2122800}. Because of the broken phenomenon, the computations used in \citet{MR2122800} cannot be applied to this case. We need to construct a function by using time-dependent tanaka formula to deal with the problem.

\end{remark}

\begin{prop}\label{prop:2}
For $0<x<b$, we can yield that
\begin{align*}
\Phi_3(\alpha,\theta;x)=\Phi_{1}(\alpha,\theta;0)\omega_1(\lambda+\theta;b,x)+\Phi_{1}(\alpha,\theta;b)\omega_2(\lambda+\theta;b,x),\\
\Phi_4(\alpha,\theta;x)=\Phi_{2}(\alpha,\theta;0)\omega_1(\lambda+\theta;b,x)+\Phi_{2}(\alpha,\theta;b)\omega_2(\lambda+\theta;b,x).
\end{align*}
where $\omega_1(\lambda+\theta;b,x)$ and $\omega_2(\lambda+\theta;b,x)$ are given in \eqref{omega1} and \eqref{omega2} respectively, and $\Phi_{1}(\alpha,\theta;0)$, $\Phi_{2}(\alpha,\theta;0)$ are given in Proposition \ref{prop:1}.
\end{prop}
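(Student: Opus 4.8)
The plan is to exploit the fact that on the event $\{\tau<T_1\}$ the random boundary has not yet jumped, so $C(t)=b$ for every $t<T_1$ and therefore $\tau=R_0\wedge R_b$ there. I would begin by splitting this event according to which of the two constant levels is reached first, writing $\{\tau<T_1\}=\big(\{R_0<R_b\}\cap\{R_0<T_1\}\big)\cup\big(\{R_b<R_0\}\cap\{R_b<T_1\}\big)$ (the tie $R_0=R_b$ is null, and $\{X_{T_1}=c\}$ is null as already noted in Proposition \ref{prop:1}), and correspondingly decomposing $\Phi_3$ into two pieces on which $X_\tau$ equals $0$ and $b$ respectively.

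For each piece I would first integrate out the independent exponential time $T_1$. Conditioning on the path of $X$ and using the density $\lambda e^{-\lambda t}$, the first piece becomes
\[
\mathbf{E}_x\!\left[\mathbf{1}_{\{R_0<R_b\}}\int_{R_0}^{\infty}\lambda e^{-(\lambda+\theta)t}e^{-\alpha X_t}\mathbf{1}_{\{X_t<c\}}\,dt\right].
\]
The key algebraic step is the factorization $e^{-(\lambda+\theta)t}=e^{-(\lambda+\theta)R_0}e^{-(\lambda+\theta)(t-R_0)}$ after the substitution $t=R_0+u$, which pulls out the discounted stopping-time factor $e^{-(\lambda+\theta)R_0}$ and leaves an integral over the post-$R_0$ trajectory.

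At this stage I would invoke the strong Markov property of $X$ at $R_0$, where $X_{R_0}=0$. The inner integral has $\mathcal{F}_{R_0}$-conditional expectation equal to $\mathbf{E}_0\big[\int_0^\infty \lambda e^{-(\lambda+\theta)u}e^{-\alpha X_u}\mathbf{1}_{\{X_u<c\}}\,du\big]$, and since $T_1$ is $\mathrm{Exp}(\lambda)$ this is precisely $\Phi_1(\alpha,\theta;0)$ (rewriting $\Phi_1$ as the $\mathrm{Exp}(\lambda)$-average of $e^{-\alpha X_u-\theta u}\mathbf{1}_{\{X_u<c\}}$). The remaining outer factor $\mathbf{E}_x[e^{-(\lambda+\theta)R_0}\mathbf{1}_{\{R_0<R_b\}}]$ is by definition $\omega_1(\lambda+\theta;b,x)$. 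The second piece is handled identically with $R_0$ replaced by $R_b$ and $X_{R_b}=b$, producing $\Phi_1(\alpha,\theta;b)\,\omega_2(\lambda+\theta;b,x)$; summing the two pieces yields the stated formula for $\Phi_3$. The derivation of $\Phi_4$ is word-for-word the same, except that the indicator $\mathbf{1}_{\{X_t<c\}}$ is replaced by $\mathbf{1}_{\{X_t\geq c\}}$, so the strong-Markov factor at $0$ and $b$ becomes $\Phi_2$ rather than $\Phi_1$.

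The main thing to get right is the bookkeeping between the killing clock $T_1$ and the hitting times; the only genuine subtlety is that the argument $\lambda+\theta$ (rather than $\theta$) in $\omega_1,\omega_2$ arises from fusing the exponential density $\lambda e^{-\lambda t}$ with the discount $e^{-\theta t}$. Once the substitution $t=R_0+u$ is made this is automatic, and the strong Markov property together with the independence (hence memorylessness) of $T_1$ does the rest, so I expect no analytic difficulty beyond verifying that the tie and $\{X_{T_1}=c\}$ events are null.
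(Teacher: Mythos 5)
Your proposal is correct and follows essentially the same route as the paper: both split the event $\{\tau<T_1\}$ according to whether $X_\tau=0$ or $X_\tau=b$, apply the strong Markov property at the hitting time, and use the exponential law of $T_1$ to produce the factors $\Phi_1(\alpha,\theta;0)$, $\Phi_1(\alpha,\theta;b)$ and $\omega_1(\lambda+\theta;b,x)$, $\omega_2(\lambda+\theta;b,x)$. The only difference is presentational: you integrate out $T_1$ explicitly and change variables $t=R_0+u$, whereas the paper conditions on $\mathcal{F}_\tau$ and invokes memorylessness via the shift-operator identity $T_1=\tau+T_1\circ\zeta_\tau$.
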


\begin{proof}
Firstly, we divide $\Phi_{1}(\alpha,\theta;x)$ into two parts.
\begin{equation*}
\Phi_3(\alpha,\theta;x)=\mathbf{E}_x\left[e^{-\alpha X_{T_1}-\theta T_1}\mathbf{1}_{\{X_{T_1<c}\}}\mathbf{1}_{\{\tau<T_1,~X_{\tau}=0\}}\right]+\mathbf{E}_x\left[e^{-\alpha X_{T_1}-\theta T_1}\mathbf{1}_{\{X_{T_1<c}\}}\mathbf{1}_{\{\tau<T_1,~X_{\tau}=b\}}\right].
\end{equation*}
By using the strong Markov property of the process $X$, the memoryless property of the exponential distribution and the two equalities $T_1=\tau+T_1\circ \zeta_{\tau}$ and $X_{T_1}=X_{T_1\circ \zeta_{\tau}}$ on the event $\{\omega:\tau(\omega)<T_1(\omega)\}$, where $\zeta_{\cdot}$ is the shift operator, we have that
\begin{align*}
\mathbf{E}_x\left[e^{-\alpha X_{T_1}-\theta T_1}\mathbf{1}_{\{X_{T_1<c}\}}\mathbf{1}_{\{\tau<T_1,~X_{\tau}=0\}}\right]=&\mathbf{E}_x\left[\mathbf{E}_x(e^{-\alpha X_{T_1}-\theta T_1}\mathbf{1}_{\{X_{T_1<c}\}}\mathbf{1}_{\{\tau<T_1,~X_{\tau}=0\}}|\mathcal{F}_{\tau})\right]\\
=&\mathbf{E}_x\left[\mathbf{1}_{\{\tau<T_1,~X_{\tau}=0\}}\mathbf{E}_x(e^{-\alpha X_{T_1}-\theta T_1}\mathbf{1}_{\{X_{T_1<c}\}}|\mathcal{F}_{\tau})\right]\\
=&\mathbf{E}_x\left[e^{-\theta\tau}\mathbf{1}_{\{\tau<T_1,~X_{\tau}=0\}}\mathbf{E}_x(e^{-\alpha X_{T_1\circ \zeta_{\tau}}-\theta T_1\circ \zeta_{\tau}}\mathbf{1}_{\{X_{T_1\circ \zeta_{\tau}<c}\}}|\mathcal{F}_{\tau})\right]\\
=&\Phi_{1}(\alpha,\theta;0)\mathbf{E}_x\left[e^{-\theta\tau}\mathbf{1}_{\{\tau<T_1,~X_{\tau}=0\}}\right]\\
=&\Phi_{1}(\alpha,\theta;0)\mathbf{E}_x\left[e^{-\theta(R_0\wedge R_b)}\mathbf{1}_{\{R_0<R_b\}}P(T_1>R_0\wedge R_b|X)\right]\\
=&\Phi_{1}(\alpha,\theta;0)\mathbf{E}_x\left[e^{-(\theta+\lambda)R_0}\mathbf{1}_{\{R_0<R_b\}}\right]
\end{align*}
Recall the definition of the function $\omega_1(\theta;b,x)$ in \eqref{e:w1}, we obtain
\begin{equation}\label{e:sx3}
\mathbf{E}_x\left[e^{-\alpha X_{T_1}-\theta T_1}\mathbf{1}_{\{X_{T_1<c}\}}\mathbf{1}_{\{\tau<T_1,~X_{\tau}=0\}}\right]=\Phi_{1}(\alpha,\theta;0)\omega_1(\lambda+\theta;b,x).
\end{equation}
Similarly,
\begin{equation}\label{e:sx4}
\mathbf{E}_x\left[e^{-\alpha X_{T_1}-\theta T_1}\mathbf{1}_{\{X_{T_1<c}\}}\mathbf{1}_{\{\tau<T_1,~X_{\tau}=b\}}\right]=\Phi_{1}(\alpha,\theta;b)\omega_2(\lambda+\theta;b,x).
\end{equation}
Combining \eqref{e:sx3} and \eqref{e:sx4}, we arrive at the expression of $\Phi_3(\alpha,\theta;x)$. Analogusly, we can obtain the expression of $\Phi_4(\alpha,\theta;x)$. This completes the proof.
\end{proof}

\subsection{Hitting problem of RBM with broken drift}
\par
In this subsection, we shall study the first passage time $\tilde{\tau}$ defined in \eqref{e:tau2} and our main aim is to determine the joint Laplace transform:
\begin{equation*}
\tilde{\Psi}(\alpha,\theta;x)=\mathbf{E}_x\left[e^{-\alpha \tilde{X_{\tilde{\tau}}}-\theta\tilde{\tau}}\right],
\end{equation*}
and similarly as the last section, we need to compute the following four Laplace transforms,
\begin{align*}
\tilde{\Phi}_1(\alpha,\theta;x)&=\mathbf{E}_x\left[e^{-\alpha \tilde{X}_{T_1}-\theta T_1}\mathbf{1}_{\{X_{T_1<c}\}}\right],\\
\tilde{\Phi}_2(\alpha,\theta;x)&=\mathbf{E}_x\left[e^{-\alpha \tilde{X}_{T_1}-\theta T_1}\mathbf{1}_{\{X_{T_1\geq c}\}}\right],\\
\tilde{\Phi}_3(\alpha,\theta;x)&=\mathbf{E}_x\left[e^{-\alpha \tilde{X}_{T_1}-\theta T_1}\mathbf{1}_{\{X_{T_1<c}\}}\mathbf{1}_{\{\tilde{\tau}<T_1\}}\right],\\
\tilde{\Phi}_4(\alpha,\theta;x)&=\mathbf{E}_x\left[e^{-\alpha \tilde{X}_{T_1}-\theta T_1}\mathbf{1}_{\{X_{T_1\geq c}\}}\mathbf{1}_{\{\tilde{\tau}<T_1\}}\right].
\end{align*}

\begin{theorem}
Let $0<x<b$. Then the joint LT of $(\tilde{X}_{\tilde{\tau}},\tilde{\tau})$ is given by
\begin{equation*}
\tilde{\Psi}(\alpha,\theta;x)=e^{-\alpha b}\frac{\tilde{\psi}_{\theta+\lambda}(x)}{\tilde{\psi}_{\theta+\lambda}(b)}+e^{-\alpha b}\mathbf{E}_x\left[e^{-\theta T_1-\alpha Y_1}\frac{\tilde{\psi}_{\theta}(x)}{\tilde{\psi}_{\theta}(b+Y_1)}\mathbf{1}_{\{\tilde{\tau}\geq T_1\}}\right],
\end{equation*}
where  the last term on the right side can be determined by the following Proposition \ref{prop:3} and Proposition \eqref{prop:4} and  $\tilde{\psi}_{\theta+\lambda}(x)$ is given in \eqref{e:tilde(psai)}.
\end{theorem}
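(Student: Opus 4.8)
The plan is to follow the route used for the Brownian case in Theorem \ref{theorem3.1}, taking advantage of a simplification specific to the reflected setting: because $\tilde{X}$ is instantaneously reflected at the origin and is never absorbed there, the stopping time $\tilde{\tau}$ records only the first passage of $\tilde{X}$ over the \emph{upper} boundary $C(t)=b+Y\mathbf{1}_{\{T_1\leq t\}}$. I would therefore split according to whether the boundary has already jumped at the moment of crossing,
\begin{equation*}
\tilde{\Psi}(\alpha,\theta;x)=\mathbf{E}_x\!\left[e^{-\alpha\tilde{X}_{\tilde{\tau}}-\theta\tilde{\tau}}\mathbf{1}_{\{\tilde{\tau}<T_1\}}\right]+\mathbf{E}_x\!\left[e^{-\alpha\tilde{X}_{\tilde{\tau}}-\theta\tilde{\tau}}\mathbf{1}_{\{\tilde{\tau}\geq T_1\}}\right],
\end{equation*}
and evaluate the two pieces in turn.

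On $\{\tilde{\tau}<T_1\}$ the boundary is still at its initial level, so $\tilde{X}_{\tilde{\tau}}=b$ and $\tilde{\tau}=\tilde{R}_b$, where $\tilde{R}_b=\inf\{t\geq0:\tilde{X}_t=b\}$. Since $T_1\sim\mathrm{Exp}(\lambda)$ is independent of $\tilde{X}$, conditioning on the path of $\tilde{X}$ replaces $\mathbf{1}_{\{\tilde{R}_b<T_1\}}$ by the factor $e^{-\lambda\tilde{R}_b}$, so this piece equals $e^{-\alpha b}\mathbf{E}_x[e^{-(\theta+\lambda)\tilde{R}_b}]$. The remaining one-sided hitting transform is read off from the increasing eigenfunction: applying optional stopping at $\tilde{R}_b$ to the martingale $e^{-st}\tilde{\psi}_s(\tilde{X}_t)$ gives $\mathbf{E}_x[e^{-s\tilde{R}_b}]=\tilde{\psi}_s(x)/\tilde{\psi}_s(b)$ for $0<x<b$. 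Taking $s=\theta+\lambda$ yields the first term $e^{-\alpha b}\tilde{\psi}_{\theta+\lambda}(x)/\tilde{\psi}_{\theta+\lambda}(b)$.

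On $\{\tilde{\tau}\geq T_1\}$ the process has not reached $b$ by time $T_1$, so $\tilde{X}_{T_1}<b<b+Y_1$ and from $T_1$ onward the boundary is frozen at the level $b+Y_1$. Conditioning on $\{T_1=t,\,Y_1=y\}$ and using the Markov property at the deterministic time $t$, the subsequent crossing is the first passage of $\tilde{X}$ from $\tilde{X}_{T_1}$ up to $b+Y_1$; since no further jump occurs there is no additional killing, its position is $b+Y_1$, and its time transform is $\tilde{\psi}_{\theta}(\tilde{X}_{T_1})/\tilde{\psi}_{\theta}(b+Y_1)$ by the same eigenfunction identity. Integrating back over the laws of $T_1$ and $Y_1$ and factoring $e^{-\alpha(b+Y_1)}=e^{-\alpha b}e^{-\alpha Y_1}$ gives
\begin{equation*}
e^{-\alpha b}\,\mathbf{E}_x\!\left[e^{-\theta T_1-\alpha Y_1}\frac{\tilde{\psi}_{\theta}(\tilde{X}_{T_1})}{\tilde{\psi}_{\theta}(b+Y_1)}\mathbf{1}_{\{\tilde{\tau}\geq T_1\}}\right],
\end{equation*}
which is the second term of the statement, the eigenfunction being evaluated at the pre-jump state $\tilde{X}_{T_1}$. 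Adding the two pieces proves the formula, while the explicit value of this residual expectation is supplied by Propositions \ref{prop:3} and \ref{prop:4} through $\tilde{\Phi}_3$ and $\tilde{\Phi}_4$.

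The delicate point is the validity of the eigenfunction martingale $e^{-st}\tilde{\psi}_s(\tilde{X}_t)$ under both the reflection at $0$ and the discontinuity of the drift at $c$. The reflecting (local-time) contribution drops out only because $\tilde{\psi}_s$ satisfies the Neumann condition $\tilde{\psi}_s'(0+)=0$ built into \eqref{e:tilde(psai)}, and the jump of $\tilde{\psi}_s''$ at $c$ produces no atomic local-time term precisely because $\tilde{\psi}_s$ is $C^1$ there ($\tilde{\psi}_s'(c-)=\tilde{\psi}_s'(c+)$), so an It\^o--Tanaka argument already suffices for this step. The genuine broken-drift difficulty, where the relevant functions fail to be $C^1$ at $c$ and the time-dependent Meyer--Tanaka formula of Lemma \ref{le:4} becomes indispensable, reappears only in the evaluation of the residual expectation, i.e.\ in Propositions \ref{prop:3} and \ref{prop:4}; that computation, rather than the reduction above, is where the main obstacle lies.
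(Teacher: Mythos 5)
Your proof is correct and is essentially the argument the paper relies on but does not write out (for this theorem it only points to the propositions and, for the analogous Theorem \ref{theorem3.1}, to the cited references): decompose on $\{\tilde{\tau}<T_1\}$ versus $\{\tilde{\tau}\geq T_1\}$, replace the indicator $\mathbf{1}_{\{\tilde{R}_b<T_1\}}$ by the killing factor $e^{-\lambda \tilde{R}_b}$, and restart at $T_1$ using the eigenfunction identity $\mathbf{E}_x[e^{-s\tilde{R}_a}]=\tilde{\psi}_s(x)/\tilde{\psi}_s(a)$, which is legitimate here because $\tilde{\psi}_s$ satisfies $\tilde{\psi}_s'(0+)=0$ and is $C^1$ at $c$, exactly as you observe. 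The only point to flag is that your computation (correctly) yields $\tilde{\psi}_{\theta}(\tilde{X}_{T_1})$ in the numerator of the second term rather than the $\tilde{\psi}_{\theta}(x)$ printed in the statement; this is evidently a typo in the paper, consistent with $M(\theta,X_{T_1},Y_1)$ being evaluated at $X_{T_1}$ in Theorem \ref{theorem3.1}, and you are right to evaluate at the pre-jump state.
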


\begin{prop}\label{prop:3}
Let $x \in \mathbb{R}$ and $\alpha, \theta>0$. Then
\begin{align}
\tilde{\Phi}_{1}(\alpha,\theta;x)&=\frac{\lambda(e^{-\alpha x}\mathbf{1}_{\{x<c\}}+e^{-\alpha c}\mathbf{1}_{\{x\geq c\}})-(\lambda+\theta)e^{-\alpha c}g_0(\theta;x)-\lambda\alpha g_1(\theta;x)}{\lambda+\theta+\alpha\mu_1-\frac{1}{2}\alpha^2},\\
\tilde{\Phi}_{2}(\alpha,\theta;x)&=\frac{\lambda(e^{-\alpha x}\mathbf{1}_{\{x\geq c\}}-e^{-\alpha c}\mathbf{1}_{\{x\geq c\}})+(\lambda+\theta)e^{-\alpha c}g_0(\theta;x)}{\lambda+\theta+\alpha\mu_2-\frac{1}{2}\alpha^2},\label{e:tildephi01}
\end{align}
where $g_0(\theta;x)=\mathbf{E}_x\left[e^{-\theta T_1}\int_c^{\infty}\tilde{p}(T_1;x,y)dy\right]$, $g_1(\theta;x)=\frac{1}{2\lambda}\mathbf{E}_x\left[e^{-\theta T_1}\tilde{p}(T_1;x,0)\right]$ and $\tilde{p}(t;x,y)$ is the transition density function for the RBM with broken drift (i.e. stochastic process $\tilde{X}$) that can be obtained in \eqref{e:tp}.
\end{prop}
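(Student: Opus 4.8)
The plan is to mirror the proof of Proposition \ref{prop:1}, the only genuinely new feature being the reflection term $dL_t$ in the dynamics of $\tilde{X}$, which contributes a local-time functional supported on $\{\tilde{X}_s=0\}$. First I would apply It\^o's formula to $e^{-\alpha\tilde{X}_t-\theta t}$. Because $d\tilde{X}_t=\mu(\tilde{X}_t)\,dt+dW_t+dL_t$ and $\tilde{X}_s=0$ on the support of $dL$, the semimartingale decomposition carries the extra term $-\alpha\int_0^t e^{-\theta s}\,dL_s$ relative to the non-reflected case. Stopping the associated martingale at the independent exponential time $T_1$ and using, exactly as in Proposition \ref{prop:1}, the independence of $T_1$ from $\tilde{X}$ to rewrite the time integrals through a factor $\tfrac1\lambda$, I obtain the balance identity
\begin{equation*}
(\lambda+\theta+\alpha\mu_1-\tfrac12\alpha^2)\tilde{\Phi}_1+(\lambda+\theta+\alpha\mu_2-\tfrac12\alpha^2)\tilde{\Phi}_2=\lambda e^{-\alpha x}-\lambda\alpha\,\mathbf{E}_x\Big[\int_0^{T_1}e^{-\theta s}\,dL_s\Big],
\end{equation*}
which is the analogue of \eqref{e:sx2}.

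The key step is to identify this local-time functional with $g_1$. Using $\mathbf{P}(T_1>s)=e^{-\lambda s}$ and Fubini, one has $\mathbf{E}_x[\int_0^{T_1}e^{-\theta s}\,dL_s]=\mathbf{E}_x[\int_0^\infty e^{-(\lambda+\theta)s}\,dL_s]$, and then the reflecting-boundary relation $\mathbf{E}_x[dL_s]=\tfrac12\tilde{p}(s;x,0)\,ds$ (the factor $\tfrac12$ being the diffusion coefficient and $\tilde{p}(\cdot\,;x,0)$ the boundary value of the transition density from \eqref{e:tp}) yields precisely $g_1(\theta;x)=\tfrac1{2\lambda}\mathbf{E}_x[e^{-\theta T_1}\tilde{p}(T_1;x,0)]$. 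Establishing this density-of-local-time identity, together with the justification of the optional stopping (integrability being immediate since $T_1$ is exponential and the integrands are bounded), is the main obstacle, since it is the step with no counterpart in the non-reflected argument.

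Next, following Proposition \ref{prop:1}, I would set $h(t,x)=e^{-\alpha x-\theta t}\mathbf{1}_{\{x<c\}}+e^{-\alpha c-\theta t}\mathbf{1}_{\{x\ge c\}}$ and apply the time-dependent Meyer--Tanaka formula of Lemma \ref{le:4} to $h(t,\tilde{X}_t)$. Since $h$ is smooth near the reflecting boundary ($0<c$) with $\partial_x^- h(s,0)=-\alpha e^{-\theta s}$, the $d\tilde{X}_s$ integral again produces a term $-\alpha\int_0^t e^{-\theta s}\,dL_s$, so the resulting martingale $\tilde{N}_t$ equals the one from Proposition \ref{prop:1} (with $X$, $L^X$, $g$ replaced by $\tilde{X}$, $L^{\tilde{X}}$, $g_0$) plus $\alpha\int_0^t e^{-\theta s}\,dL_s$. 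Applying the occupation-times formula and the analogue of \eqref{e:sx1}, stopping at $T_1$, and inserting the local-time identity above, I arrive at the stated expression for $\tilde{\Phi}_1$, in which the new numerator term $-\lambda\alpha g_1$ emerges after multiplying through by $\lambda$.

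Finally, substituting this $\tilde{\Phi}_1$ back into the balance identity gives $\tilde{\Phi}_2$. Here the $-\lambda\alpha g_1$ in the balance identity and the one carried by $\tilde{\Phi}_1$ cancel, which explains why $g_1$ is absent from \eqref{e:tildephi01}, and the remaining indicator algebra (using $e^{-\alpha x}-e^{-\alpha x}\mathbf{1}_{\{x<c\}}=e^{-\alpha x}\mathbf{1}_{\{x\ge c\}}$) reduces the right-hand side to the claimed form, completing the proof.
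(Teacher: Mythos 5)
Your proposal follows essentially the same route as the paper: It\^o's formula plus optional stopping at $T_1$ for the balance identity linking $\tilde{\Phi}_1$ and $\tilde{\Phi}_2$ (with the extra local-time term $\alpha\int_0^{T_1}e^{-\theta s}\,dL_s$ arising because $dL$ is supported on $\{\tilde{X}_s=0\}$), the time-dependent Meyer--Tanaka formula applied to the same function $h$ to extract $\tilde{\Phi}_1$, and back-substitution (with the $g_1$ terms cancelling) to obtain $\tilde{\Phi}_2$. The only cosmetic difference is that you derive the identity $g_1(\theta;x)=\tfrac1{2\lambda}\mathbf{E}_x[e^{-\theta T_1}\tilde{p}(T_1;x,0)]$ from the relation $\mathbf{E}_x[dL_s]=\tfrac12\tilde{p}(s;x,0)\,ds$ and Fubini, whereas the paper simply cites equation (3.9) of \citet{MR2884811}; both amount to the same fact.
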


\begin{proof}
By applying It\^o formula to $e^{-\alpha \tilde{X}_t-\theta t}$, we have
\begin{align*}
\tilde{M}_t=&e^{-\alpha \tilde{X}_t-\theta t}-e^{-\alpha \tilde{X}_0}+(\theta-\frac{1}{2}\alpha^2)\int_0^t e^{-\alpha \tilde{X}_s-\theta s}ds+\alpha \int_0^t \mu(\tilde{X}_s)e^{-\alpha \tilde{X}_s-\theta s}ds+\alpha \int_0^t e^{-\alpha \tilde{X}_s-\theta s}dL_s
\end{align*}
is a martingale. Then, applying the property of the local time $L_t$ in \eqref{e:Lt}, we obtain
\begin{align*}
&\tilde{\Phi}_{1}(\alpha,\theta;x)+\tilde{\Phi}_{2}(\alpha,\theta;x)-e^{-\alpha x}+(\theta-\frac{1}{2}\alpha^2)\mathbf{E}_x\left[\int_0^{T_1} e^{-\alpha \tilde{X}_s-\theta s}ds\right]\\\nonumber
&+\alpha \mathbf{E}_x\left[\int_0^{T_1} \mu(\tilde{X}_s)e^{-\alpha \tilde{X}_s-\theta s}ds\right]+\alpha\mathbf{E}_x\left[\int_0^{T_1} e^{-\theta s}dL_s\right]=0.
\end{align*}
Similarly as the proof of the Proposition \ref{prop:1}, we yield
\begin{equation}\label{e:tilde1}
\frac{\lambda+\theta+\alpha\mu_1-\frac{1}{2}\alpha^2}{\lambda}\tilde{\Phi}_{1}(\alpha,\theta;x)=e^{-\alpha x}-\alpha g_1(\theta;x)-\frac{\lambda+\theta+\alpha\mu_2-\frac{1}{2}\alpha^2}{\lambda}\tilde{\Phi}_{2}(\alpha,\theta;x),
\end{equation}
where the function $g_1(\theta;x)$ is defined by
\begin{equation*}
g_1(\theta;x)=\mathbf{E}_x\left[\int_0^{T_1} e^{-\theta s}dL_s\right].
\end{equation*}
According to the equation (3.9) of \citet{MR2884811}, the function $g_1(\theta,x)$ should be formulated by
\begin{equation*}
g_1(\theta;x)=\frac{1}{2\lambda}\mathbf{E}_x\left[e^{-\theta T_1}\tilde{p}(T_1;x,0)\right].
\end{equation*}
Since $h(t,x)=e^{-\alpha x-\theta t}\mathbf{1}_{\{x<c\}}+e^{-\alpha c-\theta t}\mathbf{1}_{\{x\geq c\}}$ and $h(t,x)$ satisfies the conditions in Lemma \ref{le:4}, as a consequence, we get that
\begin{align*}
\tilde{N}_t&=e^{-\alpha \tilde{X}_t-\theta t}\mathbf{1}_{\{\tilde{X}_t<c\}}+e^{-\alpha c-\theta t}\mathbf{1}_{\{\tilde{X}_t\geq c\}}-e^{-\alpha x}\mathbf{1}_{\{x<c\}}-e^{-\alpha c}\mathbf{1}_{\{x\geq c\}}+(\alpha\mu_1+\theta)\\\nonumber
&\int_0^t e^{-\alpha \tilde{X}_s-\theta s}\mathbf{1}_{\{\tilde{X}_s\leq c\}}ds+\theta\int_0^t e^{-\alpha c-\theta s}\mathbf{1}_{\{\tilde{X}_s>c\}}ds-\alpha^2\int_{\mathbb{R}}L^{\tilde{X}}(t,y) e^{-\alpha y-\theta t}\mathbf{1}_{\{y\leq c\}}dy\\\nonumber
&-\alpha^2\theta\int_{\mathbb{R}}\int_0^t L^{\tilde{X}}(s,y) e^{-\alpha y-\theta s}\mathbf{1}_{\{y\leq c\}}dsdy+\alpha \int_0^t e^{-\alpha \tilde{X}_s-\theta s}\mathbf{1}_{\{\tilde{X}_s\leq c\}}dL_s,
\end{align*}
is a martingale. It is similar to the proof of the Proposition \ref{prop:1} to obtain
\begin{align}\label{e:tilde2}
\tilde{\Phi}_{1}(\alpha,\theta;x)&=\frac{\lambda(e^{-\alpha x}\mathbf{1}_{\{x<c\}}+e^{-\alpha c}\mathbf{1}_{\{x\geq c\}})-(\lambda+\theta)e^{-\alpha c}g_0(\theta;x)-\lambda\alpha g_1(\theta;x)}{\lambda+\theta+\alpha\mu_1-\frac{1}{2}\alpha^2}.
\end{align}

Combining \eqref{e:tilde1} and \eqref{e:tilde2}, we arrive at \eqref{e:tildephi01}.

\end{proof}

\begin{prop}\label{prop:4}
For $0<x<b$, we have that
\begin{align*}
\tilde{\Phi}_3(\alpha,\theta;x)=\tilde{\Phi}_{1}(\alpha,\theta;b)\frac{\tilde{\psi}_{\theta+\lambda}(x)}{\tilde{\psi}_{\theta+\lambda}(b)},\\
\tilde{\Phi}_4(\alpha,\theta;x)=\tilde{\Phi}_{2}(\alpha,\theta;b)\frac{\tilde{\psi}_{\theta+\lambda}(x)}{\tilde{\psi}_{\theta+\lambda}(b)},
\end{align*}
where $\tilde{\psi}_{\theta+\lambda}(x)$ is given in \eqref{e:tilde(psai)}, and $\tilde{\Phi}_{0}(\alpha,\theta;0)$ is given in Proposition \ref{prop:3}.
\end{prop}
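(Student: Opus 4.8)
The plan is to follow the same strong-Markov argument used in the proof of Proposition \ref{prop:2}, the only structural difference being that the reflected process $\tilde{X}$ is confined to $[0,\infty)$ and therefore, before the boundary jumps, can only meet the constant boundary $b$ from below. Consequently a single exit point replaces the two-point exit ($\{X_\tau=0\}$ or $\{X_\tau=b\}$) of the non-reflected case, and the pair $(\omega_1,\omega_2)$ is replaced by the single ratio $\tilde{\psi}_{\theta+\lambda}(x)/\tilde{\psi}_{\theta+\lambda}(b)$. First I would observe that on the event $\{\tilde{\tau}<T_1\}$ the random boundary has not yet jumped, so $C(t)=b$ for all $t\le\tilde{\tau}$; hence $\tilde{\tau}$ coincides with $\tilde{R}_b:=\inf\{t\ge 0:\tilde{X}_t=b\}$ and, by path continuity, $\tilde{X}_{\tilde{\tau}}=b$ on this event.

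Applying the strong Markov property at $\tilde{\tau}$, together with the memoryless property of the exponential time $T_1$ and the identities $T_1=\tilde{\tau}+T_1\circ\zeta_{\tilde{\tau}}$ and $\tilde{X}_{T_1}=\tilde{X}_{T_1\circ\zeta_{\tilde{\tau}}}$ valid on $\{\tilde{\tau}<T_1\}$, I would condition on $\mathcal{F}_{\tilde{\tau}}$ and factor out the post-$\tilde{\tau}$ expectation. Since $\tilde{X}_{\tilde{\tau}}=b$, that inner expectation is exactly $\tilde{\Phi}_1(\alpha,\theta;b)$, and using the independence of $T_1$ from $\tilde{X}$ together with $P(T_1>\tilde{R}_b\mid\tilde{X})=e^{-\lambda\tilde{R}_b}$ one obtains
\begin{equation*}
\tilde{\Phi}_3(\alpha,\theta;x)=\tilde{\Phi}_{1}(\alpha,\theta;b)\,\mathbf{E}_x\!\left[e^{-\theta\tilde{\tau}}\mathbf{1}_{\{\tilde{\tau}<T_1\}}\right]=\tilde{\Phi}_{1}(\alpha,\theta;b)\,\mathbf{E}_x\!\left[e^{-(\theta+\lambda)\tilde{R}_b}\right].
\end{equation*}

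It remains to identify the Laplace transform of the upward first passage time with the ratio $\tilde{\psi}_{\theta+\lambda}(x)/\tilde{\psi}_{\theta+\lambda}(b)$. For this I would apply It\^o's formula to $e^{-st}\tilde{\psi}_s(\tilde{X}_t)$ with $s=\theta+\lambda$; because $\tilde{\psi}_s$ solves $\mathcal{L}f=sf$ on $(0,\infty)\setminus\{c\}$ and is $C^1$ across $c$ (the condition $f'(c-)=f'(c+)$ in $\mathcal{D}(\mathcal{L})$), all drift and bounded-variation terms cancel except the reflection contribution $\int_0^t e^{-su}\tilde{\psi}_s'(\tilde{X}_u)\,dL_u$. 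The crucial point is that this term vanishes: $L$ increases only on $\{\tilde{X}_u=0\}$ by \eqref{e:Lt}, while $\tilde{\psi}_s$ was constructed so that $\tilde{\psi}_s'(0+)=\lambda_1^{+}\lambda_1^{-}(1-1)=0$ from \eqref{e:tilde(psai)}, the Neumann condition built into $\mathcal{D}(\mathcal{L})$. Hence $e^{-st}\tilde{\psi}_s(\tilde{X}_t)$ is a bounded martingale on $[0,\tilde{R}_b]$, and optional stopping at $\tilde{R}_b$ gives $\tilde{\psi}_s(x)=\tilde{\psi}_s(b)\,\mathbf{E}_x[e^{-s\tilde{R}_b}]$. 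Substituting back proves the formula for $\tilde{\Phi}_3$; repeating the argument verbatim with $\tilde{\Phi}_2$ in place of $\tilde{\Phi}_1$ yields $\tilde{\Phi}_4$.

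The step I expect to be the main obstacle is this last one---making the reflection term disappear rigorously. Unlike the non-reflected setting of Proposition \ref{prop:2}, here one must invoke the Neumann boundary condition $f'(0+)=0$ that singles out $\tilde{\psi}_s$ as the correct increasing eigenfunction; without it the local time $dL$ would leave a residual term and the clean ratio $\tilde{\psi}_{\theta+\lambda}(x)/\tilde{\psi}_{\theta+\lambda}(b)$ would fail. A secondary technical care is the integrability needed to pass from local to true martingale and to apply optional stopping at $\tilde{R}_b$, but since $\tilde{\psi}_s$ is continuous hence bounded on the compact interval $[0,b]$ and $e^{-st}\le 1$, this is routine.
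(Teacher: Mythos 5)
Your proposal is correct and follows the route the paper intends: the paper's proof of Proposition \ref{prop:4} is only the remark that it is ``similar to that of Proposition \ref{prop:2}'', and your argument is exactly that strong-Markov/memorylessness decomposition, adapted to the one-sided exit of the reflected process. You also supply the one detail the paper leaves implicit --- that $\mathbf{E}_x[e^{-(\theta+\lambda)\tilde{R}_b}]=\tilde{\psi}_{\theta+\lambda}(x)/\tilde{\psi}_{\theta+\lambda}(b)$ because $\tilde{\psi}_s'(0+)=0$ kills the local-time term --- and that verification is accurate.
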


\begin{proof}
The proof is similar to that of Proposition \ref{prop:2}.
%Applying the strong Markov property of the process $\tilde{X}$ and memoryless property of the exponential distribution,  we have
%\begin{align*}
%\tilde{\Phi}_1(\alpha,\theta;x)&=\mathbf{E}_x\left[e^{-\alpha \tilde{X}_{T_1}-\theta T_1}\mathbf{1}_{\{\tilde{\tau}<T_1\}}\right]\\\nonumber
%&=\mathbf{E}_x\left[\mathbf{E}_x(e^{-\alpha \tilde{X}_{T_1}-\theta T_1}\mathbf{1}_{\{\tilde{\tau}<T_1\}}|\mathcal{F}_{\tilde{\tau}})\right]\\\nonumber
%&=\mathbf{E}_x(\mathbf{1}_{\{\tilde{\tau}<T_1\}}\mathbf{E}_x(e^{-\alpha \tilde{X}_{T_1}-\theta T_1}|\mathcal{F}_{\tilde{\tau}}))\\\nonumber
%&=\mathbf{E}_x\left[e^{-\theta\tilde{\tau}}\mathbf{1}_{\{\tilde{\tau}<T_1\}}\mathbf{E}_x(e^{-\alpha \tilde{X}_{T_1\circ \zeta_{\tilde{\tau}}}-\theta T_1\circ \zeta_{\tilde{\tau}}}|\mathcal{F}_{\tilde{\tau}})\right]\\\nonumber
%&=\tilde{\Phi}_0(\alpha,\theta;b)\mathbf{E}_x\left[e^{-\theta\tilde{\tau}}\mathbf{1}_{\{\tilde{\tau}<T_1\}}\right],
%\end{align*}
%where the expectation
%\begin{equation*}
%\mathbf{E}_x\left[e^{-\theta\tilde{\tau}}\mathbf{1}_{\{\tilde{\tau}<T_1\}}\right]=\mathbf{E}_x\left[e^{-\theta\tilde{\tau}}P(T_1>\tilde{\tau}|\tilde{X})\right]=\frac{\tilde{\psi}_{\theta+\lambda}(x)}{\tilde{\psi}_{\theta+\lambda}(b)}.
%\end{equation*}
%Hence we get the expression of $\tilde{\Phi}_1(\alpha,\theta;x)$.
\end{proof}

\section*{Acknowledgement}
The authors are indebted to Professor Yongjin Wang for introducing them the topic of this research and many suggestions. This work is supported by the National Natural science Foundation of China (No. 11631004, 71532001).

\bibliography{bi}
%\bibliography{sample}

%\nocite{*}

\end{document}